\documentclass[12pt,reqno]{amsart}

\usepackage[top=25mm, bottom=25mm, left=30mm, right=30mm]{geometry}
\usepackage{mathptmx}
\usepackage{mathrsfs}
\usepackage{url}
\usepackage{amsmath}
\usepackage{amssymb}
\usepackage[colorlinks=true,citecolor=red,linkcolor=red,urlcolor=blue]{hyperref}
\hypersetup{
  pdftitle={Adherence Semigroups and Density Finite-Sums Configurations},
  pdfauthor={Song Shao and Hui Xu}
}

\newtheorem{thm}{Theorem}[section]
\newtheorem{lem}[thm]{Lemma}

\newtheorem{prop}[thm]{Proposition}
\newtheorem{cor}[thm]{Corollary}

\theoremstyle{definition}

\theoremstyle{remark}
\newtheorem{rem}[thm]{Remark}

\numberwithin{equation}{section}

\newcommand{\N}{\mathbb N}
\newcommand{\Z}{\mathbb Z}
\newcommand{\A}{\mathcal A}

\begin{document}
\title{Adherence Semigroups and Density Finite-Sums Configurations}
\author[Song Shao]{Song Shao}

\address[S. Shao]{School of Mathematical Sciences, University of Science and Technology of China, Hefei, Anhui, 230026, P.R. China}

\email{songshao@ustc.edu.cn}

\author[H.~Xu]{Hui Xu}

\address[H. Xu]{Department of Mathematics, Shanghai Normal University, Shanghai, 200234, P.R. China}
\email{huixu@shnu.edu.cn}

\begin{abstract}
In this paper, we use the adherence semigroup to describe finite-sums
configurations in topological dynamical systems. We establish a correspondence
between exact finite sumsets and powers of adherence elements. As applications,
we give dynamical formulations of density finite-sums results, characterize
total minimality through the density of adherence-power orbits, and give a
positive answer to the ultrafilter question in
\cite[Question~8.9]{KMRR2024-1}. We also characterize the sets that belong to
the common sum of two commuting nonprincipal ultrafilters.
\end{abstract}

\subjclass[2020]{37B05, 05D10, 54D80}
\keywords{adherence semigroup, finite sumset, upper Banach density,
ultrafilter, total minimality}

\maketitle

\section{Introduction}

Throughout, $\N=\{1,2,\ldots\}$ and $\Z_+=\{0,1,2,\ldots\}$.  For
$C\subseteq\N$ and $t\in\Z_+$, write
\[
 C-t:=\{n\in\N:n+t\in C\}.
\]

For a subset $B\subseteq\N$ and $k\in\N$, let
\[
 B^{\oplus k}:=\left\{\sum_{n\in F}n:F\subseteq B,\ |F|=k\right\}.
\]
The study of infinite sumsets in sets of positive density originates in questions of Erd\H{o}s. Moreira, Richter and Robertson proved that every set of positive upper Banach density contains $B+C$ for some infinite sets $B,C\subseteq\N$ \cite{MRR}. Kra, Moreira, Richter and Robertson later proved the $B+B+t$ conjecture \cite{KMRR2024-2}, the existence of sums $B_1+\cdots+B_k$ of arbitrarily many infinite sets \cite{KMRR2024-1}, and finally the density finite sums theorem \cite{KMRR2026}: for every $k$, a shift of the set contains $B^{\oplus j}$ simultaneously for $1\leq j\leq k$. For the development of these problems and further questions, see \cite{KMRRProblems2025}.

The dynamical approach in these works converts finite-sums configurations into
Erd\H{o}s progressions. In this paper, we give an algebraic description of
these progressions by means of the adherence semigroup. Our purpose is not to
reprove the density finite sums theorem, but to isolate the compact-semigroup
mechanism behind the passage between adherence powers and exact finite
sumsets. This also gives a positive answer, for sets of positive upper Banach
density, to the question on commuting nonprincipal ultrafilters posed in
\cite[Question~5.24]{KMRRProblems2025} and
\cite[Question~8.9]{KMRR2024-1}. The authors of
\cite{KMRRProblems2025} observed that the answer is positive for piecewise
syndetic sets, but left the positive-density case open.

By a topological system we mean a pair $(X,T)$, where $X$ is a compact
metric space and $T$ is a homeomorphism of $X$.  For $x\in X$ and
$E\subseteq X$, set
\[
 N(x,E):=\{n\in\N:T^nx\in E\}.
\]
The \emph{adherence semigroup} of $(X,T)$ is
\[
 \A(X,T)=\bigcap_{m=1}^{\infty}\overline{\{T^n:n\geq m\}},
\]
where the closure is taken in $X^X$ under the topology of pointwise
convergence.
 
The following theorem is the basic finite sums--adherence powers
correspondence.
\begin{thm}\label{main1}
Let $(X,T)$ be a topological dynamical system, let $x\in X$, and let
$k\in\N$. For any clopen sets $E_1,\ldots,E_k\subseteq X$, the following
are equivalent:
\begin{enumerate}
\item there exists $p\in\A(X,T)$ such that
\[
 (px,p^2x,\ldots,p^kx)\in E_1\times E_2\times\cdots\times E_k;
\]
\item there exists an infinite subset $B\subseteq\N$ such that
$B^{\oplus j}\subseteq N(x,E_j)$ for every $j=1,\ldots,k$.
\end{enumerate}
\end{thm}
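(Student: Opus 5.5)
The plan is to prove the two implications separately. Write $E(X,T)$ for the closure of $\{T^n:n\in\Z_+\}$ in $X^X$. Two facts will be used throughout:
\begin{itemize}
\item[(a)] $q\mapsto T^n q$ is continuous for each fixed $n$, since $T^n$ is continuous on $X$;
\item[(b)] $p\in\A(X,T)$ means that for every $m$, every finite set of points $y_1,\dots,y_r\in X$ and every open sets $U_i\ni py_i$, there is $n\geq m$ with $T^n y_i\in U_i$ for all $i$.
\end{itemize}
Openness of the $E_j$ is used in (1)$\Rightarrow$(2) and closedness in (2)$\Rightarrow$(1).

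\emph{(2)$\Rightarrow$(1).} Let $B$ be infinite with $B^{\oplus j}\subseteq N(x,E_j)$ for $j\le k$.
\begin{enumerate}
\item Choose a nonprincipal ultrafilter $u$ on $\N$ with $B\in u$, and set $p=u\text{-}\lim_{b} T^b$ in the compact space $X^X$. Since $u$ is nonprincipal, $\{b\in B: b\geq m\}\in u$ for every $m$, so $p\in\overline{\{T^n:n\ge m\}}$ for all $m$. Hence $p\in\A(X,T)$.
\item By (a), for any $y\in X$ we have $p(py)=u\text{-}\lim_{b}T^b(py)=u\text{-}\lim_b u\text{-}\lim_c T^{b+c}y$.
\item Iterating gives
\[
 p^jx=u\text{-}\lim_{c_1}u\text{-}\lim_{c_2}\cdots u\text{-}\lim_{c_j}T^{c_1+\cdots+c_j}x .
\]
\item Because $u$ is nonprincipal, each inner limit may be restricted to $c_{i+1}\in B$ with $c_{i+1}>c_i$. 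The indices are then distinct elements of $B$, so $T^{c_1+\cdots+c_j}x\in E_j$.
\item Since $E_j$ is closed and ultrafilter limits of points of a closed set stay in it, the iterated limit lies in $E_j$. Thus $p^jx\in E_j$ for each $j\le k$.
\end{enumerate}

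\emph{(1)$\Rightarrow$(2).} Given $p$ with $p^jx\in E_j$ for $1\le j\le k$, build $b_1<b_2<\cdots$ inductively, with the convention $p^0x=x$. The invariant for $B_m=\{b_1,\dots,b_m\}$ is:
\[
(\ast)\qquad T^{\sum F}p^{j}x\in E_{|F|+j}\quad\text{for all } F\subseteq B_m \text{ and } j\ge 0 \text{ with } 1\le |F|+j\le k .
\]
For $m=0$ only $F=\emptyset$ occurs, and $(\ast)$ is exactly hypothesis (1).

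For the inductive step, fix the finitely many pairs $(F,j)$ with $F\subseteq B_m$, $j\ge1$ and $|F|+j\le k$. By $(\ast)$,
\[
T^{\sum F}p\bigl(p^{j-1}x\bigr)\in E_{|F|+j}.
\]
Since $T^{\sum F}$ is continuous and $E_{|F|+j}$ is open, $W_{F,j}:=T^{-\sum F}E_{|F|+j}$ is an open neighbourhood of $p(p^{j-1}x)$. Apply (b) to the finitely many points $p^{j-1}x$ and neighbourhoods $W_{F,j}$, with $m=b_m+1$. This gives $b_{m+1}>b_m$ with
\[
T^{\sum F+b_{m+1}}p^{j-1}x\in E_{|F|+j}\quad\text{for every such } (F,j).
\]

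This is exactly $(\ast)$ for the new sets $F'=F\cup\{b_{m+1}\}$ with $j'=j-1$. Sets $F'\subseteq B_{m+1}$ not containing $b_{m+1}$ are already covered by the old invariant, so $(\ast)$ holds for $B_{m+1}$.

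Finally, take $B=\{b_m\}$. For any $F\subseteq B$ with $|F|=i\le k$, choose $m$ with $F\subseteq B_m$ and apply $(\ast)$ with $j=0$. This gives $T^{\sum F}x\in E_i$, i.e.\ $B^{\oplus i}\subseteq N(x,E_i)$.

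The main point needing care is the bookkeeping in $(\ast)$: carrying all mixed conditions "partial sum followed by a power of $p$" at once, so that a single choice of $b_{m+1}$ from (b) preserves them. Once $(\ast)$ is set up this way, each step is a finite intersection of open conditions. In the converse direction, the only subtlety is restricting the iterated ultrafilter limits to strictly increasing indices so that the elements are distinct.
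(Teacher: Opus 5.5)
Your proposal is correct and follows essentially the paper's route. For $(1)\Rightarrow(2)$ you use the same inductive selection with the mixed invariant $T^{\sum F}p^jx\in E_{|F|+j}$ as Lemma~\ref{mainlem-2}, except that you work with the neighbourhood-basis description of $p\in\A(X,T)$ where the paper uses a fixed net. For $(2)\Rightarrow(1)$ you give the layer-by-layer closure argument of Lemma~\ref{mainlem-1}, with your iterated ultrafilter limit along a nonprincipal $u\ni B$ in place of the paper's net in $B$ converging to an element of $\A_B(X,T)$.
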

This clopen version is particularly convenient in symbolic applications.
More general target sets are treated in Section~\ref{sec:dictionary}.

Combining Theorem~\ref{main1} with the density finite sums theorem gives the
following affirmative result for sets of positive upper Banach density.

\medskip

\noindent\textbf{Question}.\ \cite[Question 8.9]{KMRR2024-1}
{\em  Is it true that for any $A\subseteq \N$ with positive upper Banach density there exist
non-principal ultrafilters $\mathfrak{p}$ and $\mathfrak{q}$ with $\mathfrak{p} +\mathfrak{q} =\mathfrak{q} + \mathfrak{p}$ containing $A$?}

\medskip

We give a positive answer to this question.

\begin{thm}\label{main2}
If $A\subseteq\N$ has positive upper Banach density, then there exist
nonprincipal ultrafilters $\mathfrak p,\mathfrak q\in\beta\N$ such that
\[
A\in \mathfrak p+\mathfrak q=\mathfrak q+\mathfrak p.
\]
\end{thm}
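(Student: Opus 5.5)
The plan is to reduce the statement to the $B+B+t$ theorem of \cite{KMRR2024-2}, which is also the case $k=2$ of the density finite sums theorem \cite{KMRR2026}. We then build $\mathfrak q$ from $\mathfrak p$ by a principal shift. Principal ultrafilters commute with every ultrafilter in $\beta\N$, so commutativity comes for free. Theorem~\ref{main1} is not strictly needed. Alternatively one could take $\mathfrak p$ to be an ultrafilter realizing an adherence element $p$ with $p^2x$ in the relevant cylinder; the direct combinatorial route below is shorter.

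\emph{Step 1.} Since $A$ has positive upper Banach density, the density finite sums theorem with $k=2$ gives $t\in\Z_+$ and an infinite $B\subseteq\N$ with
\[
B\oplus B=B^{\oplus 2}\subseteq A-t .
\]

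\emph{Step 2.} Let $\mathfrak p$ be any nonprincipal ultrafilter with $B\in\mathfrak p$; this is possible since $B$ is infinite. We claim $A-t\in\mathfrak p+\mathfrak p$. Recall that $C\in\mathfrak p+\mathfrak p$ iff $\{n:C-n\in\mathfrak p\}\in\mathfrak p$. For $n\in B$ and $m\in B\setminus\{n\}$ we have $n+m\in A-t$, so $(A-t)-n\supseteq B\setminus\{n\}\in\mathfrak p$. Hence $\{n:(A-t)-n\in\mathfrak p\}\supseteq B\in\mathfrak p$.

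\emph{Step 3.} Let $e_t$ be the principal ultrafilter at $t$ when $t\ge1$; when $t=0$, skip the shift and take $\mathfrak q=\mathfrak p$. Put $\mathfrak q=\mathfrak p+e_t$. This is nonprincipal, because it is the image of $\mathfrak p$ under the injective map $n\mapsto n+t$ and $\mathfrak p$ is nonprincipal. We use two facts about principal ultrafilters:
\[
\mathfrak r+e_t=e_t+\mathfrak r=\{C:C-t\in\mathfrak r\}\quad\text{for every }\mathfrak r\in\beta\N .
\]
This holds by direct unwinding of the definition, using commutativity of $\N$. Associativity of $+$ on $\beta\N$ then gives
\[
\mathfrak p+\mathfrak q=\mathfrak p+\mathfrak p+e_t,\qquad \mathfrak q+\mathfrak p=\mathfrak p+e_t+\mathfrak p=\mathfrak p+\mathfrak p+e_t .
\]
So $\mathfrak p+\mathfrak q=\mathfrak q+\mathfrak p$. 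Finally, $A\in\mathfrak p+\mathfrak p+e_t$ iff $A-t\in\mathfrak p+\mathfrak p$, which is Step 2.

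\emph{Main obstacle.} The only genuinely deep input is Step 1, which we take from the cited theorem. The remaining care lies in Step 3: we must verify the identity $e_t+\mathfrak r=\mathfrak r+e_t$ with the paper's convention for $C-t$, and we must check that the shift by $t$ preserves nonprincipality. Both are routine.
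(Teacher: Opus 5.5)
Your proof is correct. Its skeleton is the same as the paper's, and in particular matches the second proof given after Proposition~\ref{prop:dynamical-commuting-pair}:
\begin{itemize}
\item Theorem~\ref{thm:density-fs} with $k=2$ gives $B^{\oplus2}\subseteq A-t$.
\item One produces a nonprincipal $\mathfrak u$ with $A-t\in\mathfrak u+\mathfrak u$.
\item Setting $\mathfrak q=\mathfrak u+t$ and using centrality of principal ultrafilters gives commutativity and $A\in\mathfrak p+\mathfrak q$, exactly as in your Step~3.
\end{itemize}

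The difference is the middle step. The paper passes through the adherence semigroup. Theorem~\ref{main1}, that is, Lemma~\ref{lem:cluster-transfer} applied to some $e\in\A_B(X_A,T)$, yields $e^2a\in T^{-t}E_A$. Lemma~\ref{lem:beta-ellis} then lifts $e$ to a nonprincipal $\mathfrak u$ with $\rho(\mathfrak u+\mathfrak u)=e^2$. Because $T^{-t}E_A$ is clopen, this translates back to $A-t\in\mathfrak u+\mathfrak u$.

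Your Step~2 does this directly, via $(A-t)-n\supseteq B\setminus\{n\}$ for $n\in B$. It also makes explicit that \emph{every} nonprincipal ultrafilter containing $B$ works. It is essentially the ultrafilter shadow of Lemma~\ref{lem:cluster-transfer} at level $k=2$. It avoids the homomorphism $\rho$ and the surjectivity $\rho(\N^*)=\A(X,T)$ altogether.

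The paper's detour does not give a shorter proof of Theorem~\ref{main2}. What it buys is placing the result inside the adherence-power dictionary. There, the chain of equivalences in Proposition~\ref{prop:dynamical-commuting-pair}, together with Lemma~\ref{lem:left-ideal-comparability}, also yields the converse direction of Theorem~\ref{main3}. Your direct argument does not address that converse, and it does not need to for this statement.
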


We also study the following related question.

\medskip

\noindent\textbf{Question}.\ \cite[Question 5.24]{KMRRProblems2025}
{\em  For which sets $A\subseteq \N$ does one have
$A\in\mathfrak p+\mathfrak q=\mathfrak q+\mathfrak p$
for some pair $\mathfrak p,\mathfrak q$ of nonprincipal ultrafilters?}

\medskip

We obtain the following characterization.
\begin{thm}
\label{main3}
For $A\subseteq\N$, the following statements are equivalent:
\begin{enumerate}
\item there exist an infinite set $B\subseteq\N$ and $t\in\Z_+$ such that
      \[
       B^{\oplus2}\subseteq A-t;
      \]

\item there exist nonprincipal ultrafilters $\mathfrak p$ and
      $\mathfrak q$ such that
      \[
       A\in \mathfrak p+\mathfrak q=\mathfrak q+\mathfrak p.
      \]
\end{enumerate}
\end{thm}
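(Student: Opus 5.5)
The proof splits into the two implications. The implication (1)$\Rightarrow$(2) is a direct ultrafilter construction. The converse is where the commutation hypothesis $\mathfrak p+\mathfrak q=\mathfrak q+\mathfrak p$ has to be used, and I have not found a complete argument for its last step. Throughout I use the convention $A\in\mathfrak p+\mathfrak q$ iff $\{n: A-n\in\mathfrak q\}\in\mathfrak p$. I also use that principal ultrafilters $e_t$ commute with everything in $\beta\N$ (or $\beta\Z$).

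\emph{(1)$\Rightarrow$(2).} Let $B$ be infinite with $B^{\oplus2}\subseteq A-t$, and pick a nonprincipal $\mathfrak r\ni B$.
For $n\in B$ we have $(A-t)-n\supseteq B\setminus\{n\}\in\mathfrak r$. Hence $A-t\in\mathfrak r+\mathfrak r$, i.e.\ $A\in\mathfrak r+\mathfrak r+e_t$.
Put $\mathfrak p=\mathfrak r$ and $\mathfrak q=\mathfrak r+e_t$. The ultrafilter $\mathfrak q$ is nonprincipal, being a translate of a nonprincipal ultrafilter.
Since $e_t$ is central,
\[
 \mathfrak q+\mathfrak p=\mathfrak r+e_t+\mathfrak r=\mathfrak r+\mathfrak r+e_t=\mathfrak p+\mathfrak q,
\]
and this sum contains $A$. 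If $t=0$ one simply takes $\mathfrak p=\mathfrak q=\mathfrak r$.

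\emph{(2)$\Rightarrow$(1), first step.} Set
\[
 P=\{n:A-n\in\mathfrak q\}\in\mathfrak p,\qquad Q=\{m:A-m\in\mathfrak p\}\in\mathfrak q .
\]
Membership $Q\in\mathfrak q$ uses $A\in\mathfrak q+\mathfrak p$, i.e.\ commutativity.
Build $b_1<c_1<b_2<c_2<\cdots$ inductively. Choose $b_n$ in the $\mathfrak p$-large set $P\cap\bigcap_{j<n}(A-c_j)$, and $c_n$ in the $\mathfrak q$-large set $Q\cap\bigcap_{i\le n}(A-b_i)$.
This yields infinite sets $B_1=\{b_i\}$ and $C_1=\{c_j\}$ with $B_1+C_1\subseteq A$. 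The cross sums with $i<j$ come from $c_j\in A-b_i$, and those with $i>j$ from $b_i\in A-c_j$.

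\emph{(2)$\Rightarrow$(1), the missing step.} A sumset $B_1+C_1\subseteq A$ alone does not give a shifted $B^{\oplus2}$. The remaining task is to show that commutativity forces $\mathfrak p$ and $\mathfrak q$ to be "translates along a common direction". Concretely, I would aim to prove that some $t\in\Z$ and infinitely many indices satisfy $c_j-b_j=t$. Alternatively, after refining the construction, the pairs $(b_i,c_i)$ could be replaced by $(d_i,d_i+t)$ with $d_i+d_j+t\in A$ for all $i\neq j$. The set $B=\{d_i\}$ would then satisfy $B^{\oplus2}\subseteq A-t$.

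The first attempt at this step is dynamical. Pass to the orbit closure $X$ of $x=1_A$ in $\{0,1\}^{\Z}$, with clopen $E=\{y:y(0)=1\}$. Interpret $\mathfrak p,\mathfrak q$ as elements of the adherence semigroup $\A(X,T)$; $A\in\mathfrak p+\mathfrak q$ becomes $\mathfrak p\mathfrak q x\in E$. Then look for a single $u\in\A(X,T)$ with $u^2x\in T^{-t}E$, using that $\mathfrak p$ and $\mathfrak q$ commute on $x$. Theorem~\ref{main1} with $k=2$ and target sets $E_1=X$, $E_2=T^{-t}E$ would then produce $B$ with $B^{\oplus2}\subseteq N(x,T^{-t}E)=A-t$.

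Finding such a $u$ is the main obstacle. A natural candidate is an idempotent, or a limit of elements $\mathfrak p^{a}\mathfrak q^{b}$, in the compact abelian subsemigroup generated by $\mathfrak p$ and $\mathfrak q$. I do not yet see why any such element satisfies $u^2x\in T^{-t}E$.
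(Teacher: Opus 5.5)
Your proof of (1)$\Rightarrow$(2) is correct and is the paper's construction: $\mathfrak p=\mathfrak u\ni B$, $\mathfrak q=\mathfrak u+t$, using that principal ultrafilters are central. The converse has a genuine gap, located exactly where commutativity must be used. Your sets $B_1+C_1\subseteq A$ only use $A\in(\mathfrak p+\mathfrak q)\cap(\mathfrak q+\mathfrak p)$. That is the $B+C$ condition, and it never uses the equality of the two sums.

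The missing idea is the comparability of principal left ideals in $\beta\N$ (the paper's Lemma~\ref{lem:left-ideal-comparability}, from Hindman--Strauss): if $\mathfrak p\neq\mathfrak q$ and $(\beta\N+\mathfrak p)\cap(\beta\N+\mathfrak q)\neq\varnothing$, then $\mathfrak q\in\beta\N+\mathfrak p$ or $\mathfrak p\in\beta\N+\mathfrak q$. Commutativity puts $\mathfrak r=\mathfrak p+\mathfrak q=\mathfrak q+\mathfrak p$ in both left ideals. Suppose $\mathfrak q=\mathfrak s+\mathfrak p$. Then $\mathfrak r=\mathfrak s+(\mathfrak p+\mathfrak p)$, so $A\in\mathfrak r$ means $\{t\in\N:A-t\in\mathfrak p+\mathfrak p\}\in\mathfrak s$. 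Any $t$ in this nonempty set gives $A-t\in\mathfrak p+\mathfrak p$. Put $P'=\{m:(A-t)-m\in\mathfrak p\}\in\mathfrak p$ and run the one-sequence recursion $b_{n+1}\in P'\cap\bigcap_{i\le n}\bigl((A-t)-b_i\bigr)$ with $b_{n+1}>b_n$; this yields $B^{\oplus2}\subseteq A-t$. The other case is symmetric, and $\mathfrak p=\mathfrak q$ is the case $t=0$.

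For nonprincipal $\mathfrak p,\mathfrak q$ the lemma has a short direct proof. Suppose $\mathfrak p\notin\{\mathfrak q\}\cup(\beta\N+\mathfrak q)$ and $\mathfrak q\notin\{\mathfrak p\}\cup(\beta\N+\mathfrak p)$; both sets are closed. Then there are disjoint $P\in\mathfrak p$ and $Q\in\mathfrak q$ with $P-n\notin\mathfrak q$ and $Q-n\notin\mathfrak p$ for all $n\in\Z_+$. Let $C$ be the set of $m$ for which $\max\bigl((P\cup Q)\cap[1,m)\bigr)$ exists and lies in $P$. Then $C-n\in\mathfrak p$ and $C-n\notin\mathfrak q$ for every $n\in\N$. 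Hence $C$ belongs to every member of $\beta\N+\mathfrak p$ and to no member of $\beta\N+\mathfrak q$, so the two left ideals are disjoint.

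This also shows why the targets in your last step are off. Commutativity does not make $\mathfrak p$ and $\mathfrak q$ integer translates of each other. Take a nonprincipal $\mathfrak p\ni S=\{2^{2^n}:n\in\N\}$; then $\mathfrak q=\mathfrak p+\mathfrak p$ commutes with $\mathfrak p$. We have $S+t\in\mathfrak p+t$, but $S+t\notin\mathfrak p+\mathfrak p$ because $\{m:(S+t)-m\in\mathfrak p\}\subseteq\{t\}$. So $\mathfrak q\neq\mathfrak p+t$ for every $t\in\Z_+$, and similarly $\mathfrak p\neq\mathfrak q+t$. What commutativity actually gives is $\mathfrak q=\mathfrak s+\mathfrak p$ with $\mathfrak s$ possibly nonprincipal (here $\mathfrak s=\mathfrak p$). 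So the shift is not a fixed difference $c_j-b_j$; it is an element chosen from a member of $\mathfrak s$.

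Likewise, in your dynamical reformulation the element $u$ you want is simply the image of $\mathfrak p$ (or of $\mathfrak q$) in $\A(X,T)$. Its square is the image of $\mathfrak p+\mathfrak p$, and Theorem~\ref{main1} then finishes. An idempotent $u$ is the wrong candidate: it would force $ux=u^2x\in T^{-t}E$, an IP-type property of $A-t$ much stronger than what (1) asks for.
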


Additional dynamical and square-ultrafilter conditions equivalent to the two
statements are given in Proposition~\ref{prop:dynamical-commuting-pair}. We remark a similar characterization of sumset property by nonprincipal ultrafilter is given in \cite[Proposition 3.1]{ACG} and \cite[Lemma 2.1]{MRR}, where they show that for   $A\subset \N$, there are nonprincipal filters $\mathfrak p$ and $\mathfrak q$ such tat $A\in (\mathfrak{p}+\mathfrak{q})\cap (\mathfrak{q}+\mathfrak{p})$ if and only if there are infinite subsets $B,C\subset \N$ such that $B+C\subset A$.

\subsection*{Organization of the paper}

Section~2 collects the required facts about adherence semigroups, density,
and recurrence.  In Section~\ref{sec:dictionary} we prove the
finite-sums--adherence-powers correspondence and compare it with
Erd\H{o}s progressions. Section~4 applies the correspondence to commuting
ultrafilters, powers of minimal systems, and all-order finite-sums
configurations.

\section{Preliminaries}

In this section, we recall the concepts and results used in the paper.

\subsection{The adherence semigroup}
Let $(X,T)$ be a topological dynamical system, and let
\[
 E(X,T)=\overline{\{T^n:n\in\Z_+\}}\subseteq X^X
\]
be the enveloping semigroup.  We use the multiplication convention
\[
 (pq)x=p(qx),\qquad p,q\in E(X,T),\ x\in X.
\]
Define the \emph{adherence semigroup} by
\begin{equation}\label{eq:adherence}
 \A(X,T)=\bigcap_{m=1}^{\infty}\overline{\{T^n:n\geq m\}}.
\end{equation}
All closures in $X^X$ are taken in the topology of pointwise convergence.
The set $\A(X,T)$ is a nonempty compact subsemigroup of $E(X,T)$, and
right multiplication $p\mapsto pq$ is continuous for each fixed
$q\in E(X,T)$; see \cite[Proposition~3.1]{BGKM2002}. Every
$p\in E(X,T)$ commutes with
$T$, although two elements of $E(X,T)$ need not commute.

For $x\in X$, write
\[
 \omega(x,T)=\bigcap_{m\geq1}\overline{\{T^nx:n\geq m\}}.
\]
Then
\[
 \omega(x,T)=\{px:p\in\A(X,T)\}.
\]
Indeed, one inclusion follows by evaluation.  For the other, take a net $(T^{n_\alpha}x)$ converging to $y\in\omega(x,T)$ and then pass, by compactness of $X^X$, to a subnet of the corresponding transformations.

\begin{lem}\label{lem-nets}\label{lem:escaping-net}
If $p\in\A(X,T)$, then there is a net $(n_\alpha)_{\alpha\in D}$ in
$\N$ such that
\[
 n_\alpha\longrightarrow\infty
 \quad\text{and}\quad
 T^{n_\alpha}\longrightarrow p
\]
pointwise on $X$.  Here $n_\alpha\to\infty$ means that for every
$M\in\N$ one has $n_\alpha\geq M$ eventually.
\end{lem}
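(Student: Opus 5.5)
We index the net by neighbourhoods of $p$ paired with thresholds. Let $\mathcal U$ be the family of basic open neighbourhoods of $p$ in $X^X$ for the product (pointwise) topology. Each such neighbourhood has the form
\[
 U=\{q\in X^X: q(x_i)\in V_i,\ i=1,\ldots,r\},
\]
where $x_1,\ldots,x_r\in X$ and $V_i$ are open neighbourhoods of $p(x_i)$ in $X$. Set $D=\mathcal U\times\N$, directed by $(U,M)\le (U',M')$ iff $U'\subseteq U$ and $M\le M'$. This is a directed set: given two pairs, the intersection of two basic neighbourhoods is again a basic neighbourhood of $p$, and we take the larger of the two integers.

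For each $\alpha=(U,M)\in D$ we choose $n_\alpha$ as follows. Since $p\in\A(X,T)\subseteq\overline{\{T^n:n\geq M\}}$ by \eqref{eq:adherence}, the open set $U$ meets $\{T^n:n\ge M\}$. We may therefore pick $n_\alpha\ge M$ with $T^{n_\alpha}\in U$; this uses the axiom of choice over the index set.

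Next we verify that $n_\alpha\to\infty$. Given $M_0\in\N$, fix any $U_0\in\mathcal U$. For every $(U,M)\ge (U_0,M_0)$ we have $n_{(U,M)}\ge M\ge M_0$, so $n_\alpha\ge M_0$ eventually.

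Finally we verify that $T^{n_\alpha}\to p$ pointwise. Let $W$ be any neighbourhood of $p$ in $X^X$; it contains some $U_0\in\mathcal U$. For every $(U,M)\ge (U_0,1)$ we have $T^{n_{(U,M)}}\in U\subseteq U_0\subseteq W$. Hence $T^{n_\alpha}\to p$ in the product topology, which is exactly pointwise convergence on $X$.

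There is no real obstacle here. The only point needing care is that a sequence will not do in general, since $X^X$ is not first countable. This is why the directed set must carry the neighbourhood filter of $p$ together with the integer threshold, and why the threshold coordinate is what forces $n_\alpha\to\infty$.
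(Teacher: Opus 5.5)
Your proof is correct and is essentially the paper's argument. The paper also directs $\mathfrak N(p)\times\N$ by reverse inclusion in the first coordinate and $\leq$ in the second, and picks $n_{(U,m)}\geq m$ with $T^{n_{(U,m)}}\in U$; restricting to basic neighbourhoods, as you do, is only a cosmetic difference.
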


\begin{proof}
Let $\mathfrak N(p)$ be the neighborhood system of $p$.  Direct
$\mathfrak N(p)\times\N$ by
\[
 (U,m)\preceq(V,\ell)
 \quad\Longleftrightarrow\quad
 V\subseteq U\ \text{and}\ \ell\geq m.
\]
For each $(U,m)$, equation \eqref{eq:adherence} permits us to choose
$n_{(U,m)}\geq m$ with $T^{n_{(U,m)}}\in U$.  The resulting net
converges pointwise to $p$, and its integer values tend to infinity.
\end{proof}

Consequently, if $y=px$ with $p\in\A(X,T)$ and $U$ is a neighborhood
of $y$, then  $n_{\alpha}\in N(x,U)$ enventually.  Its
tail has unbounded, and hence infinitely many, integer values.  Thus
$N(x,U)$ contains an infinite subset.

For an infinite $B\subseteq\N$, define
\begin{equation}\label{eq:AB}
 \A_B(X,T):=\bigcap_{m=1}^{\infty}
 \overline{\{T^b:b\in B,\ b\geq m\}}.
\end{equation}
The sets in this intersection are nonempty, compact and decreasing, so
$\A_B(X,T)$ is nonempty and is contained in $\A(X,T)$.  The same
neighborhood construction proves the useful equivalence
\begin{equation}\label{eq:AB-net}
 p\in\A_B(X,T)
 \quad\Longleftrightarrow\quad
 \text{there is a net }b_\alpha\in B\text{ with }
 b_\alpha\to\infty\text{ and }T^{b_\alpha}\to p.
\end{equation}
For the forward implication, repeat the proof of
Lemma~\ref{lem:escaping-net} with $\{T^b:b\in B,\ b\geq m\}$ in place
of $\{T^n:n\geq m\}$.  The converse follows because a net in
$B$ tending to infinity is eventually contained in every tail appearing in
\eqref{eq:AB}.

\subsection{Notions of subsets of integers}
The \emph{upper Banach density} of $A\subseteq\N$ is
\[
 d^*(A):=\limsup_{N\to\infty}\sup_{M\in\Z_+}
 \frac{|A\cap\{M+1,\ldots,M+N\}|}{N}.
\]
A \emph{F{\o}lner sequence in $\N$} is a sequence
$\Phi=(\Phi_N)_{N\in\N}$ of nonempty finite subsets of $\N$ such that
\[
 \lim_{N\to\infty}
 \frac{|(\Phi_N-h)\cap\Phi_N|}{|\Phi_N|}=1
 \qquad\text{for every }h\in\N.
\]
We use the standard equivalent characterization that
\begin{equation}\label{eq:banach-folner}
 d^*(A)=\sup_\Phi\limsup_{N\to\infty}
 \frac{|A\cap\Phi_N|}{|\Phi_N|},
\end{equation}
where the supremum ranges over F{\o}lner sequences in $\N$; see
\cite[Section~2]{KMRR2024-1}.

\begin{thm}[Density finite sums theorem \cite{KMRR2026}]
\label{thm:density-fs}
Let $A\subseteq\N$ satisfy $d^*(A)>0$, and let $k\in\N$. There exist an
infinite set $B\subseteq\N$ and $t\in\Z_{+}$ such that
\[
 \bigcup_{j=1}^k B^{\oplus j}\subseteq A-t.
\]
\end{thm}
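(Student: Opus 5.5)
The statement is the density finite sums theorem of \cite{KMRR2026}, which the paper quotes rather than reproves. The route I would take is the dynamical one, ending with the correspondence of Theorem~\ref{main1}. First I would encode $A$ symbolically. Let $X=\{0,1\}^{\Z}$ with the shift $T$, let $a=1_A\in X$ (extended by $0$ on $\Z\setminus\N$), and let $E=\{\xi\in X:\xi(0)=1\}$, which is clopen. Then $N(T^ta,E)=A-t$. By Theorem~\ref{main1}, applied with $E_1=\cdots=E_k=E$, it suffices to find $t\in\Z_+$ and $p\in\A(X,T)$ such that
\[
 p^j(T^ta)\in E\qquad\text{for }j=1,\ldots,k .
\]
Since $E$ is clopen and $p$ commutes with $T$, it is in fact enough to find any point $x_0$ in the orbit closure of $a$ and a $p$ with $p^jx_0\in E$ for all $j$, provided $x_0$ can be pulled back to an actual shift $T^ta$.

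Second, I would use \eqref{eq:banach-folner} to choose a F{\o}lner sequence $\Phi$ along which $A$ has density at least $d^*(A)/2$. Passing to a subsequence, $a$ is generic along $\Phi$ for some invariant measure $\mu$ on $\overline{\{T^na\}}$ with $\mu(E)>0$. Following \cite{KMRR2024-1,KMRR2026}, I would then look for \emph{Erd\H{o}s progressions} $(x_0,x_1,\ldots,x_k)$. These are tuples where $x_0$ lies in the orbit closure of $a$, and there is a single sequence $n_i\to\infty$ with $T^{n_i}x_{j-1}\to x_j$ for all $j$ simultaneously, and $x_1,\ldots,x_k\in E$. Taking a subnet of $T^{n_i}$ in $X^X$ produces $p\in\A(X,T)$ with $px_{j-1}=x_j$, hence $p^jx_0=x_j$. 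This translates the problem exactly into the adherence-power condition.

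Third, I would produce these progressions from $\mu$. I would pass to an ergodic component and a regular point $x_1$ generic for $\mu$ with $x_1\in E$. Then I would use the pronilfactor (Host--Kra) structure to construct a measure on $X^{k+1}$, projecting to $\mu$ in each coordinate, which is supported on progression-type tuples and gives positive mass to $X\times E^k$. This is done via a continuous factor map to a nilsystem, where the rotation $p$ can be realized algebraically, so that the powers $p^j$ act as nilrotations by multiples of a common element. Positivity of the $k$-fold intersection would come from a multiple-recurrence estimate of Furstenberg type in the nilfactor, using the characteristic-factor property for the relevant averages. The main obstacle is exactly this step. One must ensure that a \emph{single} $p$ moves $x_0\mapsto x_1\mapsto\cdots\mapsto x_k$, rather than separate sequences for each step, and that the resulting point is genuinely reachable from an orbit point $T^ta$. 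That is where \cite{KMRR2026} needs the continuity of factor maps (after a suitable topological model), and I would expect to import it rather than rederive it. Once the progression is in hand, the first step and Theorem~\ref{main1} yield the infinite set $B$ and the shift $t$ with $\bigcup_{j\le k}B^{\oplus j}\subseteq A-t$.
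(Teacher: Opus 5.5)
There is a genuine gap, and it sits exactly where the theorem's content lies. The paper does not prove this statement: it quotes it from \cite{KMRR2026}. Its adherence-power results (Corollaries~\ref{cor:positive-density-return-set} and~\ref{cor:finite-powers}, and \eqref{eq:kmrr-semigroup-form}) are derived \emph{from} it, so none of them can be used here. Your first step is correct: with $x=T^ta$ and the clopen cylinder $E$, Theorem~\ref{main1} turns the conclusion into the existence of $t\in\Z_+$ and $p\in\A(X,T)$ with $p^ja\in T^{-t}E$ for $1\le j\le k$. But Theorem~\ref{main1} is an equivalence, so this only restates the theorem for the given $A$. All of the content is in your third step, which you explicitly plan to import from \cite{KMRR2026}. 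That amounts to citing the theorem, as the paper does, and your steps two and three then do no work.

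The sketch of that third step would also not work as written, for three reasons.

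(i) The progression must start at $T^ta$ itself. Your reduction to ``any $x_0$ in the orbit closure, provided $x_0$ can be pulled back to $T^ta$'' cannot be achieved by approximation, because $x\mapsto p^jx$ is in general discontinuous. A progression from $x_0\neq T^ta$ only gives $\{b_1,\dots,b_n\}^{\oplus j}\subseteq A-t_n$ with $t_n$ depending on $n$. For the same reason, a joining on $X^{k+1}$ whose zeroth marginal is $\mu$ produces progressions from $\mu$-typical points, and $a$ need not even lie in $\operatorname{supp}\mu$. For $A$ the odd numbers, the neighbourhood $\{z:z_{-2}=z_{-1}=z_0=0\}$ of $a$ is $\mu$-null. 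The construction has to be anchored at the specific generic point $a$. Moreover, a positive-measure recurrence estimate must be upgraded to a single sequence $(n_i)$ with $T^{n_i}x_{j-1}\to x_j$ simultaneously for all $j$. That is precisely the Erd\H{o}s-progression theorem of \cite{KMRR2026}.

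(ii) Passing to an ergodic component of $\mu$ destroys the genericity of $a$. You would need Theorem~\ref{thm:correspondence}, which changes both the system and the F{\o}lner sequence.

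(iii) Adherence elements of nilsystems are not nilrotations in general. For $T(x,y)=(x+\alpha,y+x)$ on $\mathbb T^2$ and $\mathfrak u\in\N^*$, the element $p=\rho(\mathfrak u)$ acts by $p(x,y)=(x+\beta,\,y+\phi(x)+\gamma)$, where $\beta,\gamma\in\mathbb T$ are constants and $\phi(x)=\mathfrak u\text{-}\lim_n nx$. If $\{2^j:j\in\N\}\in\mathfrak u$, then $\phi$ vanishes on the dyadic rationals but $\phi(2/3)\neq0$. So $\phi$ is discontinuous, and the powers $p^j$ are not translations by powers of a common element.
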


In particular, for every fixed $k$ there are $B$ and $t$ with
$B^{\oplus k}\subseteq A-t$.  The shift cannot in general be removed, as
is already seen by taking $A$ to be the odd natural numbers.

By a \emph{system} we mean a triple $(X,\mu,T)$, where $(X,T)$ is a
topological system and $\mu$ is a $T$-invariant Borel probability measure.
A point $a\in X$ is \emph{generic for $\mu$ along $\Phi$}, written
$a\in\operatorname{gen}(\mu,\Phi)$, if
\[
 \mu=\lim_{N\to\infty}\frac{1}{|\Phi_N|}
 \sum_{n\in\Phi_N}\delta_{T^na}
\]
in the weak-star topology.

\begin{lem}\label{lem:generic-return-density}
Let $(X,\mu,T)$ be a system, let
$a\in\operatorname{gen}(\mu,\Phi)$, and let $U\subseteq X$ be open.  If
$\mu(U)>0$, then $d^*(N(a,U))>0$.
\end{lem}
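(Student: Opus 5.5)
The plan is to use Urysohn's lemma to replace the indicator of $U$ by a continuous function, and then to pass from genericity along $\Phi$ to a positive lower bound on the density of $N(a,U)$ along $\Phi$. The characterization \eqref{eq:banach-folner} of $d^*$ then finishes the argument.

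First, since $\mu$ is a Borel probability measure on a compact metric space, it is inner regular. So there is a compact set $K\subseteq U$ with $\mu(K)>\mu(U)/2>0$. By Urysohn's lemma, choose a continuous $f\colon X\to[0,1]$ with $f=1$ on $K$ and $f=0$ off $U$. Then $f\leq 1_U$ pointwise, and $\int f\,d\mu\geq\mu(K)>0$.

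Second, genericity $a\in\operatorname{gen}(\mu,\Phi)$ gives, by weak-star convergence tested against the continuous function $f$,
\[
 \lim_{N\to\infty}\frac{1}{|\Phi_N|}\sum_{n\in\Phi_N}f(T^na)=\int f\,d\mu.
\]
Since $f(T^na)\leq 1_U(T^na)=1_{N(a,U)}(n)$, we get
\[
 \liminf_{N\to\infty}\frac{|N(a,U)\cap\Phi_N|}{|\Phi_N|}\geq\int f\,d\mu>0.
\]
In particular the $\limsup$ is positive. By \eqref{eq:banach-folner}, since $\Phi$ is a F{\o}lner sequence in $\N$, this yields $d^*(N(a,U))>0$.

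The only point needing care is that weak-star convergence does not directly control $\mu(U)$ for an open set; the portmanteau inequality goes in the favorable direction $\liminf\geq\mu(U)$, but the Urysohn approximation makes this explicit and self-contained. One should also note that $\Phi_N\subseteq\N$, so the sums involve only $n\geq1$, which matches the definition of $N(a,U)\subseteq\N$. Beyond this, there is no real obstacle.
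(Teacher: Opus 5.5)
Your proposal is correct and follows essentially the same route as the paper: a continuous $0\le f\le 1_U$ with $\int f\,d\mu>0$ obtained from regularity, then genericity along $\Phi$ and the F{\o}lner characterization \eqref{eq:banach-folner} of $d^*$. Your inner-regularity-plus-Urysohn construction of $f$ is simply an explicit version of the paper's one-line ``by regularity'' step.
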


\begin{proof}
By regularity, there is a continuous function $f\colon X\to[0,1]$ with
$\operatorname{supp}(f)\subseteq U$ and $\int f\,d\mu>0$.  Genericity gives
\[
 \lim_{N\to\infty}\frac{1}{|\Phi_N|}
 \sum_{n\in\Phi_N}f(T^na)=\int f\,d\mu>0.
\]
Since $f(T^na)\leq 1_{N(a,U)}(n)$, equation
\eqref{eq:banach-folner} yields the claim.
\end{proof}

A bridge between combinatorial problems and dynamical systems is the
following form of Furstenberg's correspondence principle
\cite[Theorem~2.10]{KMRR2024-1}.
\begin{thm}[Correspondence principle]
\label{thm-Fur-Prin}\label{thm:correspondence}
Let $A\subseteq\N$, and suppose $\Phi$ is a F{\o}lner sequence in $\N$
such that
\[
 \delta=\lim_{N\to\infty}\frac{|A\cap\Phi_N|}{|\Phi_N|}
\]
exists.  There exist an ergodic system $(X,\mu,T)$, a clopen set
$E\subseteq X$, a F{\o}lner sequence $\Psi$ in $\N$, and a point
$a\in\operatorname{gen}(\mu,\Psi)$ such that $\mu(E)\geq\delta$ and
\[
 A=N(a,E).
\]
\end{thm}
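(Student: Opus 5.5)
The plan is to realize $A$ as a return-time set in the full two-sided shift, build an invariant measure from $\Phi$, pass to an ergodic component, and then manufacture a new Følner sequence along which $a$ is generic for that component.

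\textbf{Step 1: the symbolic model.} Let $X=\{0,1\}^{\Z}$ with the left shift $T$, which is a homeomorphism of a compact metric space. Define $a\in X$ by $a(n)=1_A(n)$ for $n\in\N$ and $a(n)=0$ for $n\leq 0$. Put $E=\{x\in X:x(0)=1\}$, which is clopen. Then $T^na\in E$ if and only if $n\in A$, so $A=N(a,E)$ exactly.

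\textbf{Step 2: an invariant measure with $\mu(E)=\delta$.} Take a weak-star limit point
\[
 \mu=\lim_{i\to\infty}\frac{1}{|\Phi_{N_i}|}\sum_{n\in\Phi_{N_i}}\delta_{T^na}.
\]
The Følner property gives $T$-invariance of $\mu$. Since $1_E$ is continuous, $\mu(E)=\lim_i|A\cap\Phi_{N_i}|/|\Phi_{N_i}|=\delta$.

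Moreover $\mu$ is supported on $\omega(a,T)$. Because $|\Phi_N|\to\infty$, the proportion of $\Phi_N$ lying in $\{1,\ldots,m\}$ tends to $0$ for each fixed $m$. Hence $\mu$ is also a limit point of averages over $\Phi_{N_i}\cap[m,\infty)$, so it is concentrated on $\overline{\{T^na:n\geq m\}}$ for every $m$.

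\textbf{Step 3: an ergodic component.} By the ergodic decomposition, $\mu=\int\nu_y\,d\mu(y)$ with each $\nu_y$ ergodic. Since $\int \nu_y(E)\,d\mu(y)=\delta$, some ergodic $\nu=\nu_y$ satisfies $\nu(E)\geq\delta$. We may also choose it with $\nu(\omega(a,T))=1$, because this holds for $\mu$-a.e. component.

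\textbf{Step 4: genericity along a new Følner sequence.} This is the main obstacle, since $a$ need not be generic for $\nu$ along $\Phi$, or along any given sequence.

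1. By the pointwise ergodic theorem, pick $z\in\omega(a,T)$ that is generic for $\nu$ along the intervals $[1,L]$.
2. Fix a sequence $f_1,f_2,\ldots$ of cylinder functions that is dense in $C(X)$. For each $N$, choose $L_N\geq N$ such that the averages of $f_1,\ldots,f_N$ along $z$ over $[1,L_N]$ are within $1/N$ of their $\nu$-integrals.
3. Let $r_N$ be larger than the coordinate depth of $f_1,\ldots,f_N$. Since $z\in\omega(a,T)$, choose $m_N\geq N$ such that $T^{m_N}a$ agrees with $z$ on the coordinates $[-r_N,L_N+r_N]$.
4. Then $f_j(T^{m_N+n}a)=f_j(T^nz)$ for all $n\in[1,L_N]$ and $j\leq N$.
5. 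Set $\Psi_N=m_N+\{1,\ldots,L_N\}$. These are intervals in $\N$ with lengths tending to infinity, so $\Psi$ is a Følner sequence.
6. By construction, $\frac{1}{|\Psi_N|}\sum_{n\in\Psi_N}f_j(T^na)\to\int f_j\,d\nu$ for every $j$. Density of the $f_j$ then gives $a\in\operatorname{gen}(\nu,\Psi)$.

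Taking the system $(X,\nu,T)$, the clopen set $E$, the Følner sequence $\Psi$ and the point $a$ completes the proof. The delicate points are in Step 4: keeping $m_N\geq 1$, which is why we need $z\in\omega(a,T)$ rather than just the orbit closure, and matching enough coordinates on both sides to control cylinder functions of depth $r_N$.
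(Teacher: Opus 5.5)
Your proof is correct. The symbolic model, the limit measure along $\Phi$ concentrated on $\omega(a,T)$, the choice of an ergodic component $\nu$ with $\nu(E)\geq\delta$ and $\nu(\omega(a,T))=1$, and the transplanting of a $\nu$-generic point $z\in\omega(a,T)$ onto the orbit segments $T^{m_N+1}a,\ldots,T^{m_N+L_N}a$ to build the interval F{\o}lner sequence $\Psi$ all check out.

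The paper gives no proof of its own. It imports the statement from \cite[Theorem~2.10]{KMRR2024-1} and remarks only that the construction takes place in the full binary shift. Your argument is the standard proof of this form of the correspondence principle and is consistent with that remark.
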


\begin{rem}
The proof of \cite[Theorem~2.10]{KMRR2024-1} is carried out in the full
binary shift, so the ambient space may be taken to be a Cantor space.
Replacing it by the two-sided orbit closure of $a$ makes $a$ transitive.
That orbit closure is compact and zero dimensional, although it need not be
a Cantor space if isolated points occur.
\end{rem}

\section{Correspondence between finite sumsets and adherence powers}\label{sec:dictionary}
In this section, we establish the correspondence between finite sumsets
and adherence powers and prove Theorem~\ref{main1}.

\subsection{Proof of Theorem \ref{main1}}
\begin{lem}\label{mainlem-1}\label{lem:cluster-transfer}
Let $(X,T)$ be a topological system, let $x\in X$, and let
$B\subseteq\N$ be infinite.  Suppose that $E_1,\ldots,E_k\subseteq X$
satisfy
\[
 B^{\oplus j}\subseteq N(x,E_j),\qquad 1\leq j\leq k.
\]
Then, for every $p\in\A_B(X,T)$,
\[
 (px,p^2x,\ldots,p^kx)
 \in\overline{E_1}\times\cdots\times\overline{E_k}.
\]
\end{lem}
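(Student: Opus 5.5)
Fix $p\in\A_B(X,T)$ and, by \eqref{eq:AB-net}, a net $(b_\alpha)_{\alpha\in D}$ in $B$ with $b_\alpha\to\infty$ and $T^{b_\alpha}\to p$ pointwise. We prove, by induction on $j$, the following stronger statement $(S_j)$ for $1\le j\le k$:
\[
 p^{j}T^{c}x\in\overline{E_{j+i}}
 \quad\text{whenever } 0\le i\le k-j \text{ and } c\in B^{\oplus i} \text{ satisfies } \min(\text{set }F\text{ used})\ \text{arbitrary},
\]
where, more precisely, we track the finite set $F\subseteq B$ with $|F|=i$ and $c=\sum_{n\in F}n$. We also use the convention that $B^{\oplus 0}=\{0\}$. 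The case $j=k$, $i=0$ gives $p^kx\in\overline{E_k}$, and in general $i=0$ gives the claim. The point of carrying the extra $T^c$ is that $p^{j}=p\,p^{j-1}$, and we want to peel off one factor of $p$ at a time as a limit of $T^{b_\alpha}$ with $b_\alpha$ fresh, that is, not in $F$.

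The first key step uses the evaluation identity
\[
 p(y)=\lim_\alpha T^{b_\alpha}y \quad\text{for each fixed } y,
\]
together with the fact that every element of $E(X,T)$ commutes with $T$. Write $p^{j}T^{c}x=p\,(p^{j-1}T^{c}x)$, and note that $p^{j-1}T^cx$ is a fixed point $y$. Then $p^jT^cx=\lim_\alpha T^{b_\alpha}y$. Since $T^{b_\alpha}$ commutes with $p^{j-1}$ (it is a power of $T$), we have
\[
 T^{b_\alpha}y=p^{j-1}T^{c+b_\alpha}x.
\]
Because $b_\alpha\to\infty$, eventually $b_\alpha>\max F$, so $b_\alpha\notin F$ and $c+b_\alpha\in B^{\oplus(i+1)}$ via the set $F\cup\{b_\alpha\}$. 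For the base case $j=1$ this reads $T^{c+b_\alpha}x\in E_{i+1}$ directly from the hypothesis, since $c+b_\alpha\in B^{\oplus(i+1)}\subseteq N(x,E_{i+1})$. Taking the limit gives $pT^cx\in\overline{E_{i+1}}$, for all $i\le k-1$.

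For the inductive step, assume $(S_{j-1})$ holds, so $p^{j-1}T^{c'}x\in\overline{E_{j-1+i'}}$ for all admissible $c'$ with $|F'|=i'\le k-j+1$. Apply this with $c'=c+b_\alpha$ and $i'=i+1$, valid when $i\le k-j$. This gives $p^{j-1}T^{c+b_\alpha}x\in\overline{E_{j+i}}$ eventually, hence $p^{j}T^cx\in\overline{E_{j+i}}$ because closed sets contain limits of eventually-in nets.

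The main subtlety is not continuity. We never need continuity of $p$ or of left multiplication, only pointwise convergence $T^{b_\alpha}y\to py$ at the single point $y$, plus commutation of $T^{b_\alpha}$ with $p^{j-1}$. The real care needed is the distinctness bookkeeping: the sums must use distinct elements so they lie in $B^{\oplus}$, and this is exactly why the escape $b_\alpha\to\infty$ is needed, beyond mere membership $b_\alpha\in B$.
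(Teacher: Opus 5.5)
Your proof is correct and is essentially the paper's argument. Both proofs peel off one factor of $p$ at a time by adjoining a fresh summand $b_\alpha$ (eventually outside the current finite set because $b_\alpha\to\infty$), use only pointwise convergence at a single point together with commutation of $p$ with powers of $T$, and then pass to the closure. The only difference is bookkeeping: the paper fixes the target level $j$ and inducts on the number $r$ of factors of $p$, while you induct on the exponent and carry all levels $j+i$ at once.

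The clause ``$\min(\text{set }F\text{ used})$ arbitrary'' in your display of $(S_j)$ is garbled and should be deleted. Your next sentence, which quantifies over $F\subseteq B$ with $|F|=i$ and $c=\sum_{n\in F}n$, already states what you intend.
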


\begin{proof}
Fix $j\in\{1,\ldots,k\}$.  By \eqref{eq:AB-net}, there is a net
$(c_\alpha)$ in $B$ such that $c_\alpha\to\infty$ and
$T^{c_\alpha}\to p$ pointwise. We prove by induction on
$r=0,1,\ldots,j$ that
\begin{equation}\label{eq:cluster-induction}
 T^{b_1+\cdots+b_{j-r}}p^rx\in\overline{E_j}
\end{equation}
whenever $b_1,\ldots,b_{j-r}$ are distinct elements of $B$. When $r=0$,
this is exactly the assumption $B^{\oplus j}\subseteq N(x,E_j)$.

Assume that \eqref{eq:cluster-induction} holds for some $r<j$, and fix
distinct $b_1,\ldots,b_{j-r-1}\in B$. Since $c_\alpha\to\infty$, the
value $c_\alpha$ is eventually different from all the $b_i$. Hence, by the
induction hypothesis,
\[
 T^{b_1+\cdots+b_{j-r-1}+c_\alpha}p^rx\in\overline{E_j}
\]
for all sufficiently large $\alpha$. Every element of $\A(X,T)$ commutes
with every power of $T$. Therefore
\[
 T^{b_1+\cdots+b_{j-r-1}+c_\alpha}p^rx
 \longrightarrow
 T^{b_1+\cdots+b_{j-r-1}}p^{r+1}x.
\]
Since $\overline{E_j}$ is closed, the induction step follows. Taking
$r=j$ gives $p^jx\in\overline{E_j}$. As $j$ was arbitrary, the proof is
complete.
\end{proof}

\begin{lem}\label{mainlem-2}\label{lem:multilevel-selection}
Let $(X,T)$ be a topological system, let $x\in X$, let
$p\in\A(X,T)$, and fix a net $(n_\alpha)$ such that
$n_\alpha\to\infty$ and $T^{n_\alpha}\to p$.  If $U_j$ is an open
neighborhood of $p^jx$ for $1\leq j\leq k$, then there is an infinite
set
\[
 B\subseteq\{n_\alpha:\alpha\in D\}
\]
such that
\[
 B^{\oplus j}\subseteq N(x,U_j),\qquad 1\leq j\leq k.
\]
\end{lem}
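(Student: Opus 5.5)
We build $B=\{b_1<b_2<\cdots\}$ inductively, choosing each $b_i$ from the net values $n_\alpha$. To make the induction go through we carry a stronger hypothesis than the conclusion: it records what the already-chosen elements must still allow.

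\emph{Invariant.} After $b_1,\dots,b_m$ have been chosen, for every $r\geq 0$ and every $j$ with $r\le j\le k$, and every $F\subseteq\{b_1,\dots,b_m\}$ with $|F|=j-r$, we require
\[
T^{\sum F}p^{r}x\in U_j .
\]
The $r=0$ case says $B^{\oplus j}\subseteq N(x,U_j)$ for the sums built so far. The $r\ge1$ cases say each partial sum can still absorb $r$ further factors of $p$. At $m=0$ only $F=\emptyset$ occurs, and the condition reads $p^jx\in U_j$ with $r=j$, which holds by hypothesis.

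\emph{Inductive step.} Suppose the invariant holds for $b_1,\dots,b_m$. There are only finitely many conditions to check for the new element: pairs $(j,r)$ with $r\ge1$, together with sets $F\subseteq\{b_1,\dots,b_m\}$ of size $j-r$, and each of these (with $b_{m+1}$ adjoined) requires
\[
T^{\sum F+b_{m+1}}p^{r-1}x\in U_j .
\]
Every element of $\A(X,T)$ commutes with every power of $T$. Hence as $\alpha$ runs, $T^{\sum F+n_\alpha}p^{r-1}x=T^{n_\alpha}\bigl(T^{\sum F}p^{r-1}x\bigr)$ converges to $p\,T^{\sum F}p^{r-1}x=T^{\sum F}p^{r}x$, and this limit lies in the open set $U_j$ by the invariant. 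Pointwise convergence at these finitely many points means that eventually in $\alpha$ all these memberships hold simultaneously. Since $n_\alpha\to\infty$, also eventually $n_\alpha>b_m$. A directed set has a common tail beyond any finitely many tails, so we may pick $\alpha$ in it and set $b_{m+1}=n_\alpha$.

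\emph{Checking the invariant is preserved.} A new set $F'\subseteq\{b_1,\dots,b_{m+1}\}$ of size $j-r$ either avoids $b_{m+1}$, in which case its condition is inherited from the previous stage, or has the form $F'=F\cup\{b_{m+1}\}$. In the latter case $|F|=j-r-1$, and its condition $T^{\sum F+b_{m+1}}p^{r}x\in U_j$ is exactly one of the conditions just arranged, with $r+1$ in the role of $r$. This requires $r+1\le j$, which holds because $|F'|\ge1$.

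\emph{Conclusion.} The $b_i$ are strictly increasing, so $B$ is infinite and $B\subseteq\{n_\alpha:\alpha\in D\}$. Any $F\in B^{\oplus j}$-witness, i.e. any $j$-element subset of $B$, lies in some $\{b_1,\dots,b_m\}$, and the $r=0$ invariant gives $T^{\sum F}x\in U_j$. Hence $B^{\oplus j}\subseteq N(x,U_j)$ for each $1\le j\le k$.

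The main obstacle is finding the right invariant, i.e. recording the conditions for $r\ge1$. The commutation of $p$ with $T$ and the use of continuity at only finitely many points are routine. Note that right multiplication by $p$ is never used; the argument only applies the pointwise convergence $T^{n_\alpha}\to p$ at fixed points.
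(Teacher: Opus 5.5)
Your proof is correct and follows essentially the same argument as the paper. Your invariant $T^{\sum F}p^{r}x\in U_j$ for $|F|=j-r$ is exactly the paper's condition $T^{b_F}x_i\in U_{i+|F|}$ with $i=r$; the only difference is that you fold the case $F=\varnothing$ (the hypothesis $p^jx\in U_j$) into the invariant, where the paper handles it separately, and the inductive step using commutation of $p$ with powers of $T$ and convergence at finitely many points is the same.
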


\begin{proof}
Put $x_i=p^ix$ for $0\leq i\leq k$.  We construct an increasing
sequence $b_1<b_2<\cdots$ from the range of $(n_\alpha)$.  At stage $m$
we require that, for every nonempty set $F\subseteq\{1,\ldots,m\}$ and
every $0\leq i\leq k-|F|$,
\begin{equation}\label{eq:multilevel-induction}
 T^{b_F}x_i\in U_{i+|F|},
 \qquad b_F:=\sum_{r\in F}b_r.
\end{equation}
There is no condition when $m=0$. Suppose that $b_1,\ldots,b_m$ have
been chosen. For every $F\subseteq\{1,\ldots,m\}$, including
$F=\varnothing$, and every $0\leq i\leq k-|F|-1$, we have
\[
 T^{n_\alpha+b_F}x_i
 \longrightarrow T^{b_F}px_i=T^{b_F}x_{i+1}.
\]
If $F=\varnothing$, the last point is $x_{i+1}\in U_{i+1}$. If
$F\neq\varnothing$, it belongs to $U_{i+|F|+1}$ by
\eqref{eq:multilevel-induction}, applied to $x_{i+1}$. There are only
finitely many pairs $(F,i)$. Since all the sets $U_j$ are open and
$n_\alpha\to\infty$, we may choose one value
$b_{m+1}=n_\alpha>b_m$ for which
\[
 T^{b_{m+1}+b_F}x_i\in U_{i+|F|+1}
\]
for all these pairs. This extends \eqref{eq:multilevel-induction} from
$m$ to $m+1$.

Let $B=\{b_1,b_2,\ldots\}$. Taking $i=0$ and $|F|=j$ in
\eqref{eq:multilevel-induction} gives
$B^{\oplus j}\subseteq N(x,U_j)$ for every $1\leq j\leq k$.
\end{proof}

\begin{proof}[Proof of Theorem~\ref{main1}]
Assume first that $p^jx\in E_j$ for $1\leq j\leq k$. Since the sets
$E_j$ are open, Lemma~\ref{mainlem-2}, with $U_j=E_j$, gives an infinite
set $B$ such that $B^{\oplus j}\subseteq N(x,E_j)$ for all $j$.
Conversely, assume that such a set $B$ exists and choose
$q\in\A_B(X,T)$. Lemma~\ref{mainlem-1} gives
$q^jx\in\overline{E_j}=E_j$, because the sets are clopen.
\end{proof}

Combining Lemmas~\ref{mainlem-1} and~\ref{mainlem-2} gives the following
general form of Theorem~\ref{main1}.
\begin{thm}\label{main1-general}\label{thm:dictionary}
Let $(X,T)$ be a topological system, let $x\in X$, and fix $k\in\N$.
For any nonempty subsets $E_1,\ldots,E_k\subseteq X$, we have the
following implications:
\begin{equation}\label{eq:general-target-dictionary}
\begin{gathered}
 \exists p\in\A(X,T):\quad
 p^jx\in\operatorname{int}(E_j)\quad(1\leq j\leq k)
 \\
 \Downarrow
 \\
 \exists B\subseteq\N\text{ infinite}:\quad
 B^{\oplus j}\subseteq N(x,E_j)\quad(1\leq j\leq k)
 \\
 \Downarrow
 \\
 \exists q\in\A(X,T):\quad
 q^jx\in\overline{E_j}\quad(1\leq j\leq k).
\end{gathered}
\end{equation}
\end{thm}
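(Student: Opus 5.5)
The proof consists of two implications, and each follows directly from one of the two preceding lemmas. I will prove them separately, in the order displayed in \eqref{eq:general-target-dictionary}.

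For the first implication, suppose $p\in\A(X,T)$ satisfies $p^jx\in\operatorname{int}(E_j)$ for $1\leq j\leq k$. By Lemma~\ref{lem:escaping-net}, there is a net $(n_\alpha)$ in $\N$ with $n_\alpha\to\infty$ and $T^{n_\alpha}\to p$ pointwise. Put $U_j:=\operatorname{int}(E_j)$. Each $U_j$ is an open neighborhood of $p^jx$, so Lemma~\ref{lem:multilevel-selection} applies. It yields an infinite set $B$, contained in the range of $(n_\alpha)$, such that
\[
B^{\oplus j}\subseteq N(x,U_j)\subseteq N(x,E_j)\qquad(1\leq j\leq k).
\]
The second inclusion holds because $N(x,\cdot)$ is monotone in the target set.

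For the second implication, let $B$ be an infinite set with $B^{\oplus j}\subseteq N(x,E_j)$ for every $j$. The set $\A_B(X,T)$ is nonempty, as noted after \eqref{eq:AB}: it is a decreasing intersection of nonempty compact sets. It is also contained in $\A(X,T)$. Choose any $q\in\A_B(X,T)$. Lemma~\ref{lem:cluster-transfer} then gives $q^jx\in\overline{E_j}$ for $1\leq j\leq k$, and $q\in\A(X,T)$, as required.

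The only point that needs care is that the lemmas are applied to the correct sets. Lemma~\ref{lem:multilevel-selection} requires open neighborhoods, so it must be applied to $\operatorname{int}(E_j)$ rather than to $E_j$ itself. Lemma~\ref{lem:cluster-transfer} places no hypothesis on the $E_j$ beyond the sumset inclusion, which is exactly why its conclusion only reaches the closures $\overline{E_j}$. The nonemptiness assumption on the $E_j$ plays no role beyond making the statements meaningful.
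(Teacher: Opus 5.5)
Your proposal is correct and matches the paper's proof: the first implication is Lemma~\ref{lem:multilevel-selection} applied to the open neighborhoods $\operatorname{int}(E_j)$ of $p^jx$, using a net supplied by Lemma~\ref{lem:escaping-net}. The second implication is Lemma~\ref{lem:cluster-transfer} applied to an arbitrary $q\in\A_B(X,T)\subseteq\A(X,T)$.
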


\begin{proof}
The first implication follows from Lemma~\ref{mainlem-2}, applied to the
open neighborhoods $\operatorname{int}(E_j)$ of $p^jx$. For the second
implication, choose any $q\in\A_B(X,T)$ and apply
Lemma~\ref{mainlem-1}. The closure in the last line is necessary for
general, nonclosed targets.
\end{proof}

In symbolic applications the targets $E_1,\ldots,E_k$ are usually
clopen.  In that case Theorem~\ref{thm:dictionary} reduces to the exact
equivalence
\[
\begin{gathered}
 \exists p\in\A(X,T):\quad p^jx\in E_j\quad(1\leq j\leq k)
 \\
 \Longleftrightarrow
 \\
 \exists B\subseteq\N\text{ infinite}:\quad
 B^{\oplus j}\subseteq N(x,E_j)\quad(1\leq j\leq k).
\end{gathered}
\]

For a common target $E\subseteq X$ and a shift $t\in\Z_+$,
Theorem~\ref{thm:dictionary} gives
\begin{equation}\label{eq:common-target-dictionary}
\begin{gathered}
 \exists p\in\A(X,T):\quad
 p^jx\in T^{-t}\operatorname{int}(E)\quad(1\leq j\leq k)
 \\
 \Downarrow
 \\
 \exists B\subseteq\N\text{ infinite}:\quad
 \displaystyle\bigcup_{j=1}^kB^{\oplus j}\subseteq N(x,E)-t
 \\
 \Downarrow
 \\
 \exists q\in\A(X,T):\quad
 q^jx\in T^{-t}\overline E\quad(1\leq j\leq k).
\end{gathered}
\end{equation}

The density finite sums theorem now yields the following return-set
consequence.
\begin{cor}\label{cor:positive-density-return-set}
Let $(X,T)$ be a topological dynamical system, let $x\in X$, and let
$E\subseteq X$.  If $N(x,E)$ has positive upper Banach density, then for
each $k\in\N$ there exist $t\in\Z_+$, an infinite set $B\subseteq\N$, and
$p\in\A(X,T)$ such that
\[
 \bigcup_{j=1}^{k}B^{\oplus j}\subseteq N(x,E)-t
 \quad\text{and}\quad
 p^jx\in T^{-t}\overline E\quad(1\leq j\leq k).
\]
\end{cor}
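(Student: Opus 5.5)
The plan is to combine the density finite sums theorem with the second implication of the common-target dictionary \eqref{eq:common-target-dictionary}. Set $A:=N(x,E)\subseteq\N$. By hypothesis $d^*(A)>0$, so Theorem~\ref{thm:density-fs}, applied with the given $k$, produces an infinite set $B\subseteq\N$ and $t\in\Z_+$ with
\[
 \bigcup_{j=1}^kB^{\oplus j}\subseteq A-t=N(x,E)-t .
\]
This is already the first assertion of the corollary.

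For the second assertion, I would rewrite the shifted return set as a return set of a new target. Note that $n\in N(x,E)-t$ means $T^{n+t}x\in E$, which is the same as $T^nx\in T^{-t}E$. Hence
\[
 N(x,E)-t=N(x,T^{-t}E),
\]
and so $B^{\oplus j}\subseteq N(x,T^{-t}E)$ for every $1\le j\le k$.

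Now I would apply Lemma~\ref{lem:cluster-transfer}, equivalently the second implication of Theorem~\ref{thm:dictionary}, with the targets $E_j:=T^{-t}E$. Since $B$ is infinite, $\A_B(X,T)$ is nonempty, and it is contained in $\A(X,T)$. Choosing any $p\in\A_B(X,T)$, the lemma yields $p^jx\in\overline{T^{-t}E}$ for $1\le j\le k$. Because $T$ is a homeomorphism, $\overline{T^{-t}E}=T^{-t}\overline{E}$, which gives the stated conclusion.

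The only point needing care is this bookkeeping of the shift: checking that $N(x,E)-t=N(x,T^{-t}E)$ and that closure commutes with $T^{-t}$. Neither is a real obstacle. I also note that if $E$ happens to be empty, then $d^*(N(x,E))=0$, so the hypothesis excludes that case. Likewise, Theorem~\ref{thm:dictionary} is stated for nonempty targets, but Lemma~\ref{lem:cluster-transfer} has no such restriction, so no extra case analysis is needed.
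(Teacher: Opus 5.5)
Your proposal is correct and follows the paper's proof essentially verbatim: apply Theorem~\ref{thm:density-fs} to $N(x,E)$, rewrite $N(x,E)-t=N(x,T^{-t}E)$, choose $p\in\A_B(X,T)$, and invoke Lemma~\ref{lem:cluster-transfer} with $E_j=T^{-t}E$ for all $j$. You also note explicitly that $\overline{T^{-t}E}=T^{-t}\overline E$ because $T$ is a homeomorphism, which the paper uses without comment.
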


 \begin{proof}
 Apply Theorem~\ref{thm:density-fs} to $N(x,E)$. We obtain
 $t\in\Z_+$ and an infinite $B\subseteq\N$ such that
 \[
  \bigcup_{j=1}^kB^{\oplus j}\subseteq N(x,E)-t
  =N(x,T^{-t}E).
 \]
 Choose $p\in\A_B(X,T)$. Applying Lemma~\ref{mainlem-1} with
 $E_j=T^{-t}E$ for all $j$ gives
 $p^jx\in T^{-t}\overline E$ for $1\leq j\leq k$.
 \end{proof}
 
The following exact-level selection theorem is implicit in
Lemma~\ref{mainlem-2}; we include a direct proof for later use.
 \begin{thm}\label{thm:selection}
Let $(X,T)$ be a topological dynamical system, $p\in\A(X,T)$ and
$x_i=p^ix_0$ for $0\leq i\leq k$.  If $V$ is an open neighborhood of
$x_k$, then there is an infinite set $B\subseteq\N$ such that
\begin{equation}\label{eq:selection}
 B^{\oplus j}\subseteq N(x_{k-j},V),
 \qquad 1\leq j\leq k.
\end{equation}
Moreover, $B$ may be selected from the values of any net
$n_\alpha\to\infty$ satisfying $T^{n_\alpha}\to p$.
\end{thm}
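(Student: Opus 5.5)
I will construct $B=\{b_1<b_2<\cdots\}$ inductively from the values of a fixed net $(n_\alpha)_{\alpha\in D}$ with $n_\alpha\to\infty$ and $T^{n_\alpha}\to p$; such a net exists by Lemma~\ref{lem:escaping-net}. The construction is the single-target analogue of the one in Lemma~\ref{mainlem-2}. The induction hypothesis at stage $m$ will be the following: for every nonempty $F\subseteq\{1,\ldots,m\}$ with $|F|\leq k$,
\[
 T^{b_F}x_{k-|F|}\in V,\qquad b_F:=\sum_{r\in F}b_r .
\]
Taking $|F|=j$ then gives exactly \eqref{eq:selection}, since $B^{\oplus j}$ consists of the sums $b_F$ with $|F|=j$.

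This hypothesis alone is too weak to carry the induction, so I will strengthen it. At stage $m$ I will also demand that, for every $F\subseteq\{1,\ldots,m\}$ with $|F|<k$ (including $F=\varnothing$), the point $T^{b_F}x_{k-|F|}$ lies in $V$. For $F=\varnothing$ this holds trivially, since $x_k\in V$. This is the same bookkeeping as \eqref{eq:multilevel-induction} with all $U_j$ replaced by suitable shifts, and the two conditions together form a single family of finitely many open conditions.

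For the inductive step, fix $F\subseteq\{1,\ldots,m\}$ with $|F|\leq k-1$. Because $T^{b_F}$ is continuous and commutes with $p$, we have
\[
 T^{n_\alpha+b_F}x_{k-|F|-1}=T^{b_F}T^{n_\alpha}x_{k-|F|-1}\longrightarrow T^{b_F}p\,x_{k-|F|-1}=T^{b_F}x_{k-|F|},
\]
and the limit lies in the open set $V$ by the strengthened hypothesis. There are only finitely many such $F$, and $n_\alpha\to\infty$. So I can choose a single index $\alpha$ with $n_\alpha>b_m$ such that $T^{n_\alpha+b_F}x_{k-|F|-1}\in V$ for all these $F$ at once, and set $b_{m+1}=n_\alpha$. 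Every new set has the form $F'=F\cup\{m+1\}$ with $|F'|=|F|+1\leq k$, and for it the condition reads $T^{b_{F'}}x_{k-|F'|}\in V$. This verifies the hypothesis at stage $m+1$, covering both the sets of size at most $k$ and the sets of size less than $k$. Sets not containing $m+1$ were already handled at stage $m$.

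The only delicate point is arranging the hypothesis so that it is self-propagating, and this rests on the identity $p\,x_{k-|F|-1}=x_{k-|F|}$. Every convergence step raises the power of $p$ by one while adding one element to $F$, so the target stays the single open set $V$ at every level. Since $B$ was chosen from the values of the given net, the final clause of Theorem~\ref{thm:selection} also follows.
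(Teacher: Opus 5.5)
Your proof is correct and follows essentially the same route as the paper: choose $b_{m+1}$ from the net so that $T^{b_{m+1}+b_F}x_{k-|F|-1}\in V$ simultaneously for all $F$ with $|F|\le k-1$, since the limit point $T^{b_F}x_{k-|F|}$ already lies in $V$ by the inductive hypothesis, or because $x_k\in V$ when $F=\varnothing$. Your ``strengthening'' is vacuous, since the added condition is just the original hypothesis together with $x_k\in V$; the original hypothesis was therefore not too weak, though the redundancy does no harm.
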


\begin{proof}
Fix such a net, put $b_0=0$ and $F_0=\varnothing$, and construct
$b_1<b_2<\cdots$ inductively.  Suppose
$F_m=\{b_1,\ldots,b_m\}$ has been chosen and satisfies
\[
 F_m^{\oplus j}\subseteq N(x_{k-j},V)
 \quad(1\leq j\leq\min\{k,m\}).
\]
For $1\leq j\leq\min\{k,m+1\}$ and
$H\subseteq F_m$ with $|H|=j-1$, write $s_H=\sum_{h\in H}h$, with
$s_\varnothing=0$.  Since $p$ commutes with every power of $T$,
\[
 T^{n_\alpha+s_H}x_{k-j}\longrightarrow
 pT^{s_H}x_{k-j}=T^{s_H}x_{k-j+1}.
\]

We claim that $T^{s_H}x_{k-j+1}\in V$ for every such pair $(j,H)$.
For $j=1$, this is the assumption $x_k\in V$.  For $j\geq2$, it follows
from
\[
 F_m^{\oplus(j-1)}\subseteq N(x_{k-j+1},V).
\]

There are only finitely many pairs $(j,H)$ under consideration.  Since
$V$ is open and $n_\alpha\to\infty$,
we may choose a value $b_{m+1}>b_m$ for which all the corresponding points
$T^{b_{m+1}+s_H}x_{k-j}$ lie in $V$.  This extends the induction
hypothesis from $F_m$ to $F_{m+1}$.  Taking
$B=\{b_1,b_2,\ldots\}$ proves \eqref{eq:selection}.
\end{proof}

\subsection{Comparison with Erd\H{o}s progressions}
We use the definition of Erd\H{o}s progression from
\cite[Definition~2.1]{KMRR2026}. A tuple
$(x_0,x_1,\ldots,x_k)\in X^{k+1}$ is a $(k+1)$-term
\emph{Erd\H{o}s progression} if there is a strictly increasing sequence
$c\colon\N\to\N$ such that
\begin{equation}\label{eq:erdos-progression}
 T^{c(n)}x_{j-1}\longrightarrow x_j,
 \qquad 1\leq j\leq k,\quad n\to\infty.
\end{equation}
The next proposition identifies this sequential notion exactly with the
semigroup notion used here.

\begin{prop}\label{prop:erdos-adherence}
Let $(X,T)$ be a topological dynamical system and let
$(x_0,x_1,\ldots,x_k)\in X^{k+1}$.  The following are equivalent.
\begin{enumerate}
\item The tuple $(x_0,x_1,\ldots,x_k)$ is a $(k+1)$-term
Erd\H{o}s progression.
\item There is $p\in\A(X,T)$ such that
\[
 x_j=p^jx_0,\qquad 1\leq j\leq k.
\]
\end{enumerate}
If $c$ witnesses (1), then $p$ may be taken to be any pointwise cluster
point of the sequence $(T^{c(n)})$ in $X^X$.  Conversely, a sequence
witnessing (1) may be selected from the values of any net
$n_\alpha\to\infty$ for which $T^{n_\alpha}\to p$ pointwise.
\end{prop}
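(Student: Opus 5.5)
For (1)$\Rightarrow$(2), let $c$ witness \eqref{eq:erdos-progression} and let $p$ be any pointwise cluster point of $(T^{c(n)})$ in $X^X$. Such a point exists by compactness of $X^X$.

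Since $c$ is strictly increasing, $c(n)\geq m$ for all large $n$. Hence $p\in\overline{\{T^n:n\geq m\}}$ for every $m$, so $p\in\A(X,T)$. Choose a subnet $T^{c(n_\beta)}\to p$ pointwise. For each $1\leq j\leq k$ we get $T^{c(n_\beta)}x_{j-1}\to px_{j-1}$. But the full sequence $T^{c(n)}x_{j-1}$ converges to $x_j$, so the subnet does too. Therefore $px_{j-1}=x_j$ by uniqueness of limits in the metric space $X$. An immediate induction then gives $x_j=p^jx_0$.

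For (2)$\Rightarrow$(1), fix $p\in\A(X,T)$ with $x_j=p^jx_0$. Take any net $n_\alpha\to\infty$ with $T^{n_\alpha}\to p$, as provided by Lemma~\ref{lem:escaping-net}. Then $T^{n_\alpha}x_{j-1}\to px_{j-1}=x_j$ for each $j$. This is only convergence at the finitely many points $x_0,\ldots,x_{k-1}$.

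Since $X$ is metric, we can extract a sequence. Let $d$ be the metric and, for each $m\in\N$, consider the condition
\[
d(T^{n_\alpha}x_{j-1},x_j)<1/m \quad\text{for all } 1\leq j\leq k.
\]
This condition holds for all sufficiently large $\alpha$, because there are finitely many $j$ and the index set is directed. Also $n_\alpha\geq M$ eventually, for every $M$. So inductively we may choose $\alpha_m$ such that the condition holds and
\[
c(m):=n_{\alpha_m}>c(m-1).
\]
Then $c$ is strictly increasing, takes values in $\{n_\alpha\}$, and satisfies \eqref{eq:erdos-progression}. This also proves the selection clause.

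The only delicate point is the identification in the first direction that cluster points lie in $\A(X,T)$ rather than merely in $E(X,T)$. This is exactly where the strict monotonicity of $c$ (so that $c(n)\to\infty$) is used. The second direction is where the metrizability of $X$ and the finiteness of the tuple are used to pass from nets to sequences.
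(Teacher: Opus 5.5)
Your proof is correct and follows the paper's argument essentially step for step. In (1)$\Rightarrow$(2) you take a cluster point of $(T^{c(n)})$, use $c(n)\to\infty$ to place it in $\A(X,T)$, and read off $px_{j-1}=x_j$ along a convergent subnet; in (2)$\Rightarrow$(1) you recursively choose net values exceeding the previous one with all $k$ distances below $1/m$. As in the paper, you should set an auxiliary value $c(0)=0$ so that the condition $c(1)>c(0)$ is defined.
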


\begin{proof}
Assume first that \eqref{eq:erdos-progression} holds.  By the compactness of
$E(X,T)$, some subnet of $(T^{c(n)})$ converges pointwise to an element
$p\in E(X,T)$.  Since $c(n)\to\infty$, every such cluster point belongs to
$\A(X,T)$.  Restricting \eqref{eq:erdos-progression} to this subnet gives
\[
 px_{j-1}=x_j,\qquad 1\leq j\leq k.
\]
Induction yields $x_j=p^jx_0$.

Conversely, suppose $x_j=p^jx_0$ and fix a net
$n_\alpha\to\infty$ such that $T^{n_\alpha}\to p$.  Let $d$ be a
compatible metric on $X$.  Put $c(0)=0$ as an auxiliary initial value
and, for $r\geq1$, recursively choose values $c(r)=n_{\alpha_r}$
so that $c(r)>c(r-1)$ and
\[
 d\bigl(T^{c(r)}p^{j-1}x_0,p^jx_0\bigr)<\frac1r,
 \qquad 1\leq j\leq k.
\]
This is possible because only finitely many evaluation points are involved
and $n_\alpha\to\infty$.  The resulting strictly increasing sequence
satisfies \eqref{eq:erdos-progression}.
\end{proof}

There are two useful consequences of Proposition
\ref{prop:erdos-adherence}.
\begin{enumerate}
\item Under this identification, \cite[Lemma~2.2]{KMRR2026} is precisely
the open-target implication in Lemma \ref{lem:multilevel-selection}: from
$p^jx_0\in U_j$ for $1\leq j\leq k$, one obtains a single infinite set
$B$ such that
\[
 B^{\oplus j}\subseteq N(x_0,U_j),\qquad 1\leq j\leq k.
\]
Lemma \ref{lem:cluster-transfer} supplies the reverse passage from finite
sumsets to an Erd\H{o}s progression, equivalently to an adherence-power
progression, with closure of the targets.  Theorem \ref{thm:dictionary}
records both directions and isolates the boundary issue.

\item The dynamical conclusion of \cite[Theorems~1.2 and~2.4]{KMRR2026}
can therefore be written in the semigroup form
\begin{equation}\label{eq:kmrr-semigroup-form}
 \exists t\in\Z_+\ \text{and}\  p\in\A(X,T):\quad
 p^ja\in T^{-t}E,\qquad 1\leq j\leq k.
\end{equation}
Indeed, their Erd\H{o}s progression $(a,x_1,\ldots,x_k)$ has
$x_j=p^ja$.  Conversely, any $p$ in \eqref{eq:kmrr-semigroup-form}
produces such an Erd\H{o}s progression.  Thus Corollary
\ref{cor:finite-powers} is the adherence-semigroup formulation of the same
finite dynamical configuration.  The logical roles are different:
\cite{KMRR2026} proves its Erd\H{o}s-progression theorem as the main
dynamical input to the density finite sums theorem, whereas the present note
takes the density finite sums theorem as an input and derives
\eqref{eq:kmrr-semigroup-form} from it.
\end{enumerate}

\subsection{Combinatorial and dynamical equivalence}

\begin{prop}
\label{thm:equivalence}
For each fixed $k\in\N$, the following statements are equivalent.
\begin{enumerate}
\item[${\rm (C)}_k$] Every $A\subseteq\N$ with $d^*(A)>0$ admits an
infinite $B\subseteq\N$ and $t\in\Z_{+}$ such that
$B^{\oplus k}\subseteq A-t$.
\item[${\rm (D)}_k$] Let $(X,\mu,T)$ be a system, let
$a\in\operatorname{gen}(\mu,\Phi)$ for some F{\o}lner sequence $\Phi$, and let
$E\subseteq X$ be open with $\mu(E)>0$. Then there exist
$p\in\A(X,T)$ and $t\in\Z_{+}$ such that
$p^ka\in T^{-t}E$.
\end{enumerate}
\end{prop}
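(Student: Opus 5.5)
We prove the two implications separately, with Theorem~\ref{thm:dictionary} doing the dynamical work and the correspondence principle (Theorem~\ref{thm:correspondence}) handling the passage from sets to systems.

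\emph{(C)$_k\Rightarrow$(D)$_k$.} Let $(X,\mu,T)$, $a\in\operatorname{gen}(\mu,\Phi)$ and $E$ open with $\mu(E)>0$ be given.

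1. By Lemma~\ref{lem:generic-return-density}, $A:=N(a,E)$ satisfies $d^*(A)>0$.

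2. Hypothesis (C)$_k$ gives an infinite $B$ and $t\in\Z_+$ with
\[
B^{\oplus k}\subseteq A-t=N(a,T^{-t}E).
\]

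3. Choose $p\in\A_B(X,T)$. Lemma~\ref{mainlem-1}, with only the $k$-th target present, gives $p^ka\in\overline{T^{-t}E}$. (The lemma is stated for all $j\le k$, but its proof treats each $j$ separately, so we may take $E_j=X$ for $j<k$.)

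4. The obstacle is that we only get the closure, while (D)$_k$ asks for the open set itself. To fix this, shrink $E$ first. By regularity choose an open $E'$ with $\overline{E'}\subseteq E$ and $\mu(E')>0$; for instance, take a compact $K\subseteq E$ of positive measure and let $E'$ be an open set with $K\subseteq E'\subseteq\overline{E'}\subseteq E$, using normality. Run steps 1--3 with $E'$ in place of $E$. This gives
\[
p^ka\in T^{-t}\overline{E'}\subseteq T^{-t}E,
\]
which is (D)$_k$.

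\emph{(D)$_k\Rightarrow$(C)$_k$.} Let $d^*(A)>0$.

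1. By \eqref{eq:banach-folner}, pick a F{\o}lner sequence $\Phi$ along which $|A\cap\Phi_N|/|\Phi_N|$ has positive limsup. Pass to a subsequence, which is still F{\o}lner, so that the limit $\delta>0$ exists.

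2. Theorem~\ref{thm:correspondence} gives an ergodic system $(X,\mu,T)$, a clopen set $E$ with $\mu(E)\ge\delta>0$, a F{\o}lner sequence $\Psi$, and $a\in\operatorname{gen}(\mu,\Psi)$ with $A=N(a,E)$.

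3. Since $E$ is open, (D)$_k$ yields $p\in\A(X,T)$ and $t$ with $p^ka\in T^{-t}E$.

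4. Now $T^{-t}E$ is an open neighborhood of $p^ka$. Apply Theorem~\ref{thm:selection} with $x_0=a$ and $V=T^{-t}E$, using only the level $j=k$. This produces an infinite $B$ with
\[
B^{\oplus k}\subseteq N(a,T^{-t}E)=A-t.
\]
Alternatively, apply Lemma~\ref{mainlem-2} with $U_j=X$ for $j<k$ and $U_k=T^{-t}E$.

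The only nontrivial point in the whole argument is the closure issue in step 4 of the first direction, which the inner shrinking of $E$ resolves. Note that the resulting $t$ comes from (C)$_k$ applied to $N(a,E')$, so it is compatible with the shrinking.
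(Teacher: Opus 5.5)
Your proposal is correct and follows the paper's proof essentially step for step. The paper also first shrinks $E$ to an open $U$ with $\overline U\subseteq E$ and $\mu(U)>0$; it takes $U$ to be a small neighborhood of a point of $E\cap\operatorname{supp}(\mu)$ rather than using inner regularity and normality. It then applies Lemma~\ref{lem:cluster-transfer} with $E_j=X$ for $j<k$, and for the converse it uses the correspondence principle followed by Theorem~\ref{thm:selection}, exactly as you do.
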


\begin{proof}
Assume ${\rm (C)}_k$.  Choose
$y\in E\cap\operatorname{supp}(\mu)$ and an open neighborhood $U$ of $y$
such that $\overline U\subseteq E$.  Then $\mu(U)>0$, and
Lemma~\ref{lem:generic-return-density} shows that $N(a,U)$ has positive
upper Banach density.
Apply ${\rm (C)}_k$ to obtain $B$ and $t$ with
$B^{\oplus k}\subseteq N(a,U)-t=N(a,T^{-t}U)$.  For any
$p\in\A_B(X,T)$, Lemma \ref{lem:cluster-transfer}, applied with
$E_k=T^{-t}U$ and $E_j=X$ for $1\leq j<k$, gives
\[
 p^ka\in T^{-t}\overline U\subseteq T^{-t}E.
\]

Conversely, suppose ${\rm (D)}_k$ holds and let $A\subseteq\N$ have
positive upper Banach density. By \eqref{eq:banach-folner}, choose a
F{\o}lner sequence $\Phi$ along which the upper density of $A$ is
positive. Passing to a subsequence, which remains a F{\o}lner sequence,
we may assume that
\[
 \delta=\lim_{N\to\infty}\frac{|A\cap\Phi_N|}{|\Phi_N|}>0.
\]
Apply Theorem~\ref{thm:correspondence}.
We obtain an ergodic system $(X,\mu,T)$, a clopen $E$, and a generic
point $a$ such that $\mu(E)>0$ and $A=N(a,E)$.  By ${\rm (D)}_k$, there
are $p\in\A(X,T)$ and $t\in\Z_{+}$ with $p^ka\in T^{-t}E$.
Apply Theorem \ref{thm:selection} with $x_0=a$ and $V=T^{-t}E$.  It
produces an infinite $B$ satisfying
\[
 B^{\oplus k}\subseteq N(a,T^{-t}E)=A-t.
\]
\end{proof}
We next replace the positive-density hypothesis by a finite partition of the
natural numbers.  The exact-level analogue of Theorem
\ref{thm:equivalence} has a particularly simple dynamical side.

\begin{thm}
\label{thm:partition-exact}
Fix $k\in\N$.  The following statements are equivalent.
\begin{enumerate}
\item[${\rm (PR)}_k$] For every finite partition
\[
 \N=C_1\sqcup\cdots\sqcup C_r,
\]
there exist $i\in\{1,\ldots,r\}$ and an infinite set $B\subseteq\N$ such
that $B^{\oplus k}\subseteq C_i$.
\item[${\rm (DPR)}_k$] For every topological dynamical system $(X,T)$,
every $x\in X$, and every finite clopen partition
\[
 X=E_1\sqcup\cdots\sqcup E_r,
\]
there exist $i\in\{1,\ldots,r\}$ and $p\in\A(X,T)$ such that
$p^kx\in E_i$.
\end{enumerate}
Moreover, these equivalent statements hold for every $k\in\N$.
\end{thm}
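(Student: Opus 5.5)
The proof has two parts. First I show ${\rm (PR)}_k\Leftrightarrow{\rm (DPR)}_k$ via the symbolic coding and Theorem~\ref{main1}. Then I verify one of the two statements for every $k$.

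\emph{From ${\rm (DPR)}_k$ to ${\rm (PR)}_k$.} Given a partition $\N=C_1\sqcup\cdots\sqcup C_r$, I code it as a point of the full shift. Let $\omega\in\{1,\ldots,r\}^{\Z}$ with $\omega(n)=i$ for $n\in C_i$, extended arbitrarily to $n\leq0$. Let $X$ be the orbit closure of $\omega$ under the left shift $T$, take $x=\omega$, and put $E_i=\{y\in X:y(0)=i\}$. These sets are clopen, partition $X$, and satisfy $N(x,E_i)=C_i$. Now ${\rm (DPR)}_k$ gives $i$ and $p\in\A(X,T)$ with $p^kx\in E_i$. Theorem~\ref{main1}, applied with $E_j=X$ for $j<k$ and $E_k=E_i$ (all clopen), yields an infinite $B$ with $B^{\oplus k}\subseteq N(x,E_i)=C_i$.

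\emph{From ${\rm (PR)}_k$ to ${\rm (DPR)}_k$.} Given $(X,T)$, $x$ and a clopen partition $E_1,\ldots,E_r$, set $C_i=N(x,E_i)$. These sets partition $\N$. By ${\rm (PR)}_k$ there are $i$ and an infinite $B$ with $B^{\oplus k}\subseteq N(x,E_i)$. Theorem~\ref{main1}, with the clopen targets $E_j=X$ for $j<k$ and $E_k=E_i$, gives $p\in\A(X,T)$ with $p^kx\in E_i$.

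\emph{Validity for every $k$.} The dynamical side is trivial. Since $\A(X,T)$ is nonempty, pick any $p$; then $p^kx$ lies in $X=\bigsqcup E_i$, so it belongs to some $E_i$. Hence ${\rm (DPR)}_k$ holds for all $k$, and therefore ${\rm (PR)}_k$ holds as well. This recovers the Ramsey-type statement (essentially Ramsey's theorem for $k$-sets) without invoking Ramsey's theorem explicitly. The work is hidden in Lemma~\ref{mainlem-2} (via Theorem~\ref{main1}).

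\emph{Expected obstacle.} The only delicate point is the $k$-th-level-only use of Theorem~\ref{main1}, namely padding with $E_j=X$. That theorem gives $B^{\oplus j}\subseteq N(x,X)=\N$ for $j<k$, which is vacuous. So the padding is harmless, and clopenness of $X$ and of the $E_i$ guarantees the exact equivalence without closures. Alternatively, in the dynamical-to-combinatorial step one can use Theorem~\ref{thm:selection} with $x_0=x$ and $V=E_i$, which directly gives $B^{\oplus k}\subseteq N(x,E_i)$.
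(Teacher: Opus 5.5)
Your proposal is correct and matches the paper's proof step for step: the same symbolic coding with $E_i=\{z:z_0=i\}$ and $N(x,E_i)=C_i$, the same transfer $C_i=N(x,E_i)$ in the other direction, and the same observation that ${\rm (DPR)}_k$ holds trivially because $\A(X,T)\neq\varnothing$. The only cosmetic difference is that you invoke the packaged equivalence of Theorem~\ref{main1} with the padding $E_j=X$ for $j<k$. The paper instead cites the ingredients of that theorem directly, namely Lemma~\ref{lem:cluster-transfer} for ${\rm (PR)}_k\Rightarrow{\rm (DPR)}_k$ and Theorem~\ref{thm:selection} for the converse.
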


\begin{proof}
Assume ${\rm (PR)}_k$.  Given $(X,T)$, $x$, and a clopen partition as in
${\rm (DPR)}_k$, the sets
\[
 C_i=N(x,E_i),\qquad 1\leq i\leq r,
\]
form a partition of $\N$.  Choose $i$ and an infinite $B$ with
$B^{\oplus k}\subseteq C_i$.  Choose $p\in\A_B(X,T)$ and apply
Lemma~\ref{lem:cluster-transfer} with $E_k=E_i$ and $E_j=X$ for
$1\leq j<k$.  Closedness of $E_i$ gives $p^kx\in E_i$.

Conversely, let $\N=C_1\sqcup\cdots\sqcup C_r$.  Extend its coloring
arbitrarily to the nonpositive integers and encode it by a point
$a\in\{1,\ldots,r\}^{\Z}$.  Let $T$ be the shift, let $X$ be the orbit
closure of $a$, and put
\[
 E_i=\{z\in X:z_0=i\}.
\]
Then $E_1,\ldots,E_r$ form a clopen partition of $X$ and
$N(a,E_i)=C_i$.  By ${\rm (DPR)}_k$, some $p\in\A(X,T)$ satisfies
$p^ka\in E_i$.  Theorem \ref{thm:selection}, applied with $x_0=a$ and
$V=E_i$, produces an infinite $B$ such that
$B^{\oplus k}\subseteq N(a,E_i)=C_i$.

Finally, $\A(X,T)$ is nonempty.  For any $p\in\A(X,T)$, the point
$p^kx$ lies in one member of the clopen partition.  Thus
${\rm (DPR)}_k$ always holds, completing the proof.
\end{proof}

The multilevel form of Theorem \ref{thm:dictionary}, together with an
idempotent adherence element, yields the stronger simultaneous statement.

\begin{thm}
\label{thm:partition-multilevel}
Fix $k\in\N$.  The following statements are equivalent.
\begin{enumerate}
\item[${\rm (PR)}_{\leq k}$] For every finite partition
$\N=C_1\sqcup\cdots\sqcup C_r$, there exist an index $i$ and an infinite
$B\subseteq\N$ such that
\[
 \bigcup_{j=1}^kB^{\oplus j}\subseteq C_i.
\]
\item[${\rm (DPR)}_{\leq k}$] For every topological dynamical system
$(X,T)$, every $x\in X$, and every finite clopen partition
$X=E_1\sqcup\cdots\sqcup E_r$, there exist an index $i$ and
$p\in\A(X,T)$ such that
\[
 p^jx\in E_i,\qquad 1\leq j\leq k.
\]
\end{enumerate}
Moreover, these equivalent statements hold for every $k\in\N$.
\end{thm}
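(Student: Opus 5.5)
The proof mirrors that of Theorem~\ref{thm:partition-exact}, with Theorem~\ref{main1} (the clopen multilevel correspondence) in place of the single-level tools. I first establish the two implications and then prove ${\rm (DPR)}_{\leq k}$ directly.

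For ${\rm (PR)}_{\leq k}\Rightarrow{\rm (DPR)}_{\leq k}$, take $(X,T)$, $x$ and a clopen partition $X=E_1\sqcup\cdots\sqcup E_r$. The sets $C_i=N(x,E_i)$ partition $\N$. Choose $i$ and an infinite $B$ with $\bigcup_{j\le k}B^{\oplus j}\subseteq C_i$, and take any $p\in\A_B(X,T)$. Lemma~\ref{lem:cluster-transfer} with $E_j:=E_i$ for all $j$ gives $p^jx\in\overline{E_i}=E_i$ for $1\le j\le k$.

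For the converse, I encode the partition as a point $a\in\{1,\ldots,r\}^{\Z}$, let $X$ be its orbit closure under the shift, and set $E_i=\{z:z_0=i\}$, so that $N(a,E_i)=C_i$. Then ${\rm (DPR)}_{\leq k}$ gives $p$ and $i$ with $p^ja\in E_i$ for all $j\le k$. The implication (1)$\Rightarrow$(2) of Theorem~\ref{main1}, with every $E_j=E_i$, yields an infinite $B$ with $B^{\oplus j}\subseteq C_i$ for $1\le j\le k$.

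To prove that ${\rm (DPR)}_{\leq k}$ always holds, I use an idempotent. Since $\A(X,T)$ is a nonempty compact semigroup in which right multiplication is continuous (a compact right-topological semigroup), the Ellis--Numakura lemma provides $u\in\A(X,T)$ with $u^2=u$. Then $u^jx=ux$ for every $j\ge1$, and this single point lies in some $E_i$, so $u^jx\in E_i$ for all $1\le j\le k$. The only point needing care is the Ellis--Numakura hypotheses: compactness and right-continuity are exactly what \cite[Proposition~3.1]{BGKM2002} supplies. Note also that $u$ need not be $T^0$, since $u\in\A(X,T)$ by construction, so the selection in Theorem~\ref{main1} still produces an infinite set $B$.
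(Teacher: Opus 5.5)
Your proof is correct and follows the paper's argument essentially verbatim. The paper uses the partition $C_i=N(x,E_i)$ with $p\in\A_B(X,T)$ and Lemma~\ref{lem:cluster-transfer} for one direction, the symbolic encoding with multilevel selection for the other (your appeal to Theorem~\ref{main1} is just Lemma~\ref{lem:multilevel-selection}), and an Ellis--Numakura idempotent $u$ with $u^jx=ux$ to show the statements always hold. Your closing remark about $u\neq T^0$ is unnecessary and its justification is off: in a distal system the identity lies in $\A(X,T)$ and is the only idempotent there. Nothing depends on it, since the selection step only ever needs $p\in\A(X,T)$.
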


\begin{proof}
The implication ${\rm (PR)}_{\leq k}\Rightarrow
{\rm (DPR)}_{\leq k}$ follows by applying
${\rm (PR)}_{\leq k}$ to the partition $C_i=N(x,E_i)$, choosing
$p\in\A_B(X,T)$, and using Lemma \ref{lem:cluster-transfer} at every
level $1\leq j\leq k$.

For the converse, encode a partition of $\N$ by the two-sided symbolic
system used in the proof of Theorem \ref{thm:partition-exact}.  If
${\rm (DPR)}_{\leq k}$ gives $p^ja\in E_i$ for $1\leq j\leq k$,
then Lemma \ref{lem:multilevel-selection}, with $U_j=E_i$, produces an
infinite $B$ satisfying
\[
 \bigcup_{j=1}^kB^{\oplus j}\subseteq N(a,E_i)=C_i.
\]

It remains to verify the dynamical assertion.  By the Ellis--Numakura lemma,
the compact right-topological semigroup $\A(X,T)$ contains an idempotent
$u$; see, for example, \cite[Corollary~2.6]{HS2012}. Choose $i$ such
that $ux\in E_i$.
Since $u^j=u$ for every $j\geq1$, one has
$u^jx=ux\in E_i$ for $1\leq j\leq k$.  Thus
${\rm (DPR)}_{\leq k}$ holds.
\end{proof}

\begin{rem}
The exact-level assertion follows from $\A(X,T)\neq\varnothing$. By contrast, the multilevel assertion uses the semigroup
structure in an essential way: its proof requires an idempotent. Requiring
one infinite set $B$ to work simultaneously for every $k$ is Hindman's
finite-sums theorem \cite{Hindman1974}. Its proof requires the usual
idempotent recursion rather than applying Theorem
\ref{thm:partition-multilevel} separately for each $k$.
\end{rem}

Theorem \ref{thm:density-fs} and the stronger, multi-level part of
Lemma \ref{lem:cluster-transfer} give the following simultaneous version.

\begin{cor}\label{cor:finite-powers}
Let $(X,\mu,T)$ be a system, let
$a\in\operatorname{gen}(\mu,\Phi)$, and let
$E\subseteq X$ be open with $\mu(E)>0$. For every $k\in\N$ there exist
$p\in\A(X,T)$ and $t\in\Z_+$ such that
\[
 p^ja\in T^{-t}E,\qquad 1\leq j\leq k.
\]
\end{cor}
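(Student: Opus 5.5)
The plan is to mirror the first half of the proof of Proposition~\ref{thm:equivalence}, with the density finite sums theorem (Theorem~\ref{thm:density-fs}) as input and the multilevel Lemma~\ref{lem:cluster-transfer} as the transfer step. Fix $k$. Because $\mu(E)>0$, the set $E\cap\operatorname{supp}(\mu)$ is nonempty; pick a point $y$ in it. By regularity of the compact metric space, choose an open neighborhood $U$ of $y$ with $\overline U\subseteq E$. Since $y\in\operatorname{supp}(\mu)$, we get $\mu(U)>0$. Lemma~\ref{lem:generic-return-density} applied to $a\in\operatorname{gen}(\mu,\Phi)$ and $U$ then shows that $d^*(N(a,U))>0$.

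Next, apply Theorem~\ref{thm:density-fs} to $A=N(a,U)$ with the given $k$. This yields an infinite $B\subseteq\N$ and $t\in\Z_+$ such that
\[
 \bigcup_{j=1}^kB^{\oplus j}\subseteq N(a,U)-t=N(a,T^{-t}U).
\]
The last identity holds because $T^{n+t}a\in U$ if and only if $T^na\in T^{-t}U$.

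Now choose any $p\in\A_B(X,T)$. This set is nonempty and contained in $\A(X,T)$, as noted after \eqref{eq:AB}. Apply Lemma~\ref{lem:cluster-transfer} with $x=a$ and $E_j=T^{-t}U$ for every $1\leq j\leq k$. It gives $p^ja\in\overline{T^{-t}U}$ for each $j$. Since $T$ is a homeomorphism, $\overline{T^{-t}U}=T^{-t}\overline U\subseteq T^{-t}E$, so $p^ja\in T^{-t}E$ for $1\leq j\leq k$, as required.

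The only delicate point is the boundary issue. Lemma~\ref{lem:cluster-transfer} delivers membership only in closures, and $E$ is merely open. This is why we shrink $E$ to $U$ with $\overline U\subseteq E$ while keeping $\mu(U)>0$ before invoking the density theorem; choosing $y$ in the support of $\mu$ is what makes this possible. Everything else is a direct assembly of earlier results. Equivalently, one could quote Corollary~\ref{cor:positive-density-return-set} with the set $U$ in place of $E$ and then use $\overline U\subseteq E$.
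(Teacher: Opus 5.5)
Your proposal is correct and follows the paper's proof: shrink $E$ to an open $U$ with $\mu(U)>0$ and $\overline U\subseteq E$, then apply Lemma~\ref{lem:generic-return-density} and Theorem~\ref{thm:density-fs} to $N(a,U)$, and finally transfer via Lemma~\ref{lem:cluster-transfer} for any $p\in\A_B(X,T)$. Your added details (choosing $y\in\operatorname{supp}(\mu)$, and $\overline{T^{-t}U}=T^{-t}\overline U$ because $T$ is a homeomorphism) only make explicit what the paper leaves implicit.
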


\begin{proof}
Choose an open $U$ with $\mu(U)>0$ and $\overline U\subseteq E$ as in
the proof of Theorem \ref{thm:equivalence}.  By
Lemma~\ref{lem:generic-return-density}, $N(a,U)$ has positive upper
Banach density.  Theorem \ref{thm:density-fs} gives an infinite
$B\subseteq\N$ and $t\in\Z_+$ such that
\[
 \bigcup_{j=1}^kB^{\oplus j}
 \subseteq N(a,U)-t=N(a,T^{-t}U).
\]
Choose $p\in\A_B(X,T)$.  Lemma \ref{lem:cluster-transfer}, applied with
$E_j=T^{-t}U$ for $1\leq j\leq k$, gives
$p^ja\in T^{-t}\overline U\subseteq T^{-t}E$.
\end{proof}

\begin{rem}
Ergodicity is not needed in either ${\rm (D)}_k$ or Corollary
\ref{cor:finite-powers}; the only measure-theoretic input in these directions
is the existence of the stated generic point.
\end{rem}

\section{Applications}

In this section, we give applications of the tools developed in the preceding
sections.

\subsection{An ultrafilter consequence}

Let $\beta\N$ denote the Stone--\v{C}ech compactification of $\N$, identified
with the space of ultrafilters on $\N$, and put
$\N^*:=\beta\N\setminus\N$.  For the algebra of $\beta\N$ and its
Ramsey-theoretic applications, see \cite{HS2012}; see also
\cite[Section~5.5]{KMRRProblems2025}.  Addition on $\beta\N$ is defined by
\[
 C\in\mathfrak r+\mathfrak s
 \quad\Longleftrightarrow\quad
 \{n\in\N:C-n\in\mathfrak s\}\in\mathfrak r,
\]
for any ultrafilter $\mathfrak r, \mathfrak s\in \beta\mathbb{N}$.

For $t\geq1$, we identify $t$ with the principal ultrafilter at $t$;
addition by $0$ is interpreted as the identity.  It follows directly from
the definition that, for every $\mathfrak r\in\beta\N$ and $t\in\Z_+$,
\begin{equation}\label{eq:principal-translation}
 C\in\mathfrak r+t
 \quad\Longleftrightarrow\quad
 C-t\in\mathfrak r
 \quad\Longleftrightarrow\quad
 C\in t+\mathfrak r.
\end{equation}
Thus every principal ultrafilter lies in the center of $\beta\N$.

For a compact Hausdorff space $X$ and a sequence $(x_n)_{n\in\N}$ in
$X$, its ultrafilter limit along $\mathfrak r\in\beta\N$, denoted by
$\mathfrak r\text{-}\lim_nx_n$, is the unique point $x$ such that
\[
 \{n\in\N:x_n\in U\}\in\mathfrak r
\]
for every neighborhood $U$ of $x$.

We shall use the standard bridge from the adherence semigroup of
Section~\ref{sec:dictionary} to $\beta\N$.  For a topological dynamical
system $(X,T)$, define
\[
 \rho\colon\beta\N\longrightarrow E(X,T),\qquad
 \rho(\mathfrak r)x=\mathfrak r\text{-}\lim_nT^nx.
\]

\begin{lem}\label{lem:beta-ellis}
The map $\rho$ is a semigroup homomorphism, and its restriction to
$\N^*$ maps onto the adherence semigroup.  More precisely,
\begin{equation}\label{eq:beta-to-ellis}
 \rho(\mathfrak r+\mathfrak s)
 =\rho(\mathfrak r)\rho(\mathfrak s),
 \qquad
 \rho(\N^*)=\A(X,T).
\end{equation}
\end{lem}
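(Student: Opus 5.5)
Fix $x\in X$. The plan has two parts: first the homomorphism identity, then the identification $\rho(\N^*)=\A(X,T)$. Before either, note that $\rho(\mathfrak r)\in E(X,T)$. The map $\beta\N\to X^X$ sending $n\mapsto T^n$ extends continuously to $\beta\N$ because $X^X$ is compact Hausdorff, and its value at $\mathfrak r$ is exactly $\rho(\mathfrak r)$, computed coordinatewise as an ultrafilter limit. Since this extension is continuous and $\N$ is dense in $\beta\N$, its image lies in the closure of $\{T^n:n\in\N\}$, which is contained in $E(X,T)$.

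For the homomorphism identity I would prove the iterated-limit formula
\[
 (\mathfrak r+\mathfrak s)\text{-}\lim_n y_n=\mathfrak r\text{-}\lim_m\,\mathfrak s\text{-}\lim_n y_{m+n}
\]
for any sequence $(y_n)$ in a compact Hausdorff space. Let $z$ be the right-hand side and $U$ a neighbourhood of $z$. Choose a closed neighbourhood $V\subseteq U$ of $z$. Then the set $\{m:\mathfrak s\text{-}\lim_n y_{m+n}\in\operatorname{int}V\}$ belongs to $\mathfrak r$. For each such $m$, the set $\{n:y_{m+n}\in U\}$, which equals $\{n:y_n\in U\}-m$, belongs to $\mathfrak s$. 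By the definition of addition on $\beta\N$ this gives $\{n:y_n\in U\}\in\mathfrak r+\mathfrak s$.

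Apply this with $y_n=T^nx$. Writing $T^{m+n}x=T^m(T^nx)$ and using continuity of $T^m$, the inner limit is $T^m\rho(\mathfrak s)x$. The outer limit of $T^m w$ over $m$ along $\mathfrak r$, with $w=\rho(\mathfrak s)x$, is $\rho(\mathfrak r)w$. Hence $\rho(\mathfrak r+\mathfrak s)x=\rho(\mathfrak r)\rho(\mathfrak s)x$, which matches the convention $(pq)x=p(qx)$. The only delicate point is the order of the limits; it is forced by the definition of $+$, which puts $\mathfrak r$ outermost.

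For $\rho(\N^*)\subseteq\A(X,T)$, take $\mathfrak r\in\N^*$ and fix $m$. Since $\mathfrak r$ is nonprincipal, the tail $\{n\geq m\}$ lies in $\mathfrak r$. Every basic neighbourhood of $\rho(\mathfrak r)$ in $X^X$ is determined by finitely many points $x_1,\ldots,x_\ell$ and open sets $U_i\ni\rho(\mathfrak r)x_i$. The sets $\{n:T^nx_i\in U_i\}$ are each in $\mathfrak r$, so their intersection with the tail is in $\mathfrak r$ and therefore nonempty. Thus $\rho(\mathfrak r)$ lies in $\overline{\{T^n:n\geq m\}}$ for every $m$.

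For the reverse inclusion, let $p\in\A(X,T)$ and take the net $(n_\alpha)$ from Lemma~\ref{lem:escaping-net}. I would consider the family consisting of the sets $\{n_\alpha:\alpha\geq\alpha_0\}$ together with the cofinite sets. This family has the finite intersection property, because $n_\alpha\to\infty$ means each set $\{n_\alpha:\alpha\geq\alpha_0\}$ is infinite and meets every cofinite set. Extend it to an ultrafilter $\mathfrak r$; since it contains all cofinite sets, $\mathfrak r\in\N^*$.

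The remaining claim, and the main obstacle, is that $\rho(\mathfrak r)=p$. Fix $y\in X$ and a neighbourhood $U$ of $p y$. Eventually $T^{n_\alpha}y\in U$, so $\{n:T^ny\in U\}$ contains a tail set $\{n_\alpha:\alpha\geq\alpha_0\}$ and hence lies in $\mathfrak r$. This gives $\rho(\mathfrak r)y=py$ for every $y$. The step needs care in one respect: a single ultrafilter must work for all points $y$ simultaneously. The tail-of-net construction handles this, because the same tails serve every $y$.
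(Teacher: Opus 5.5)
Your proposal is correct and follows the paper's proof. It proves the homomorphism identity via the iterated ultrafilter limit with $\mathfrak r$ outermost, obtains $\rho(\N^*)\subseteq\A(X,T)$ because nonprincipal ultrafilters contain every tail, and gets surjectivity by extending a filter base to a nonprincipal ultrafilter. The only cosmetic difference is that you build the ultrafilter from the tails $\{n_\alpha:\alpha\geq\alpha_0\}$ of the escaping net of Lemma~\ref{lem:escaping-net}; directedness of the index set is what makes finitely many tails intersect. The paper instead uses the sets $H(\mathcal U,m)=\{n\geq m:T^n\in\mathcal U\}$ directly, and that net is itself built from those sets.
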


\begin{proof}
For $x\in X$, the definition of addition gives
\[
 \rho(\mathfrak r+\mathfrak s)x
 =\mathfrak r\text{-}\lim_n\,
   \mathfrak s\text{-}\lim_m T^{n+m}x
 =\mathfrak r\text{-}\lim_n T^n\rho(\mathfrak s)x
 =\rho(\mathfrak r)\rho(\mathfrak s)x.
\]
If
$\mathfrak u\in\N^*$, every tail $\{n\in\N:n\geq m\}$ belongs to
$\mathfrak u$, so $\rho(\mathfrak u)$ belongs to every closed set in
\eqref{eq:adherence}.

Conversely, let $p\in\A(X,T)$.  For each neighborhood $\mathcal U$ of
$p$ in $X^X$ and each $m\in\N$, set
\[
 H(\mathcal U,m):=\{n\geq m:T^n\in\mathcal U\}.
\]
These sets have the finite-intersection property by
\eqref{eq:adherence}.  Extend them to an ultrafilter $\mathfrak u$.
Since $\mathfrak u$ contains every tail, it is nonprincipal; and since it
contains $H(\mathcal U,1)$ for every $\mathcal U$, one has
$\rho(\mathfrak u)=p$.
\end{proof}

\begin{proof}[Proof of Theorem \ref{main2}]
We derive the result directly from the positive-density return-set form of
the dictionary, Corollary~\ref{cor:positive-density-return-set}.  Let
\[
 X=\{0,1\}^{\Z},\qquad (Tx)_m=x_{m+1},
\]
let $a\in X$ be defined by $a_n=1_A(n)$ for $n\in\N$ (with arbitrary
coordinates for $n\leq0$), and put
\[
 E=\{x\in X:x_0=1\}.
\]
Then $E$ is clopen and $N(a,E)=A$.  Apply
Corollary~\ref{cor:positive-density-return-set} with $k=2$.  Since
$d^*(A)>0$ and $\overline E=E$, it gives $t\in\Z_+$ and
$p\in\A(X,T)$ such that
\begin{equation}\label{eq:section-three-power}
 p^ja\in T^{-t}E,\qquad j=1,2.
\end{equation}

By Lemma~\ref{lem:beta-ellis}, choose a nonprincipal
$\mathfrak u\in\beta\N$ with $\rho(\mathfrak u)=p$.  Again by
\eqref{eq:beta-to-ellis},
\[
 \rho(\mathfrak u+\mathfrak u)=p^2.
\]
For every clopen set $U\subseteq X$ and every
$\mathfrak r\in\beta\N$, the definition of the ultrafilter limit gives
\[
 \rho(\mathfrak r)a\in U
 \quad\Longleftrightarrow\quad
 N(a,U)\in\mathfrak r.
\]
Taking $U=T^{-t}E$ and using the case $j=2$ of
\eqref{eq:section-three-power}, we therefore obtain
\begin{equation}\label{eq:D-in-uu}
 A-t=N(a,T^{-t}E)\in\mathfrak u+\mathfrak u.
\end{equation}

Set
\[
 \mathfrak p=\mathfrak u,
 \qquad
 \mathfrak q=\mathfrak u+t.
\]
The ultrafilter $\mathfrak p$ is nonprincipal by construction, and
$\mathfrak q$ is nonprincipal because translation by the principal
ultrafilter $t$ preserves nonprincipality.  Indeed, if a singleton belonged
to $\mathfrak u+t$, then \eqref{eq:principal-translation} would put a
finite set in $\mathfrak u$.

We next check commutativity.  By the associativity and the centrality of the
principal ultrafilter $t$,
\[
 \mathfrak p+\mathfrak q
 =\mathfrak u+(\mathfrak u+t)
 =(\mathfrak u+\mathfrak u)+t,
\]
whereas
\[
 \mathfrak q+\mathfrak p
 =(\mathfrak u+t)+\mathfrak u
 =\mathfrak u+(t+\mathfrak u)
 =\mathfrak u+(\mathfrak u+t)
 =(\mathfrak u+\mathfrak u)+t.
\]
Thus $\mathfrak p+\mathfrak q=\mathfrak q+\mathfrak p$.  Finally,
applying \eqref{eq:principal-translation} with
$\mathfrak r=\mathfrak u+\mathfrak u$ and using
\eqref{eq:D-in-uu}, we obtain
\[
 A\in(\mathfrak u+\mathfrak u)+t
 \quad\Longleftrightarrow\quad
 A-t\in\mathfrak u+\mathfrak u.
\]
Since $\mathfrak p+\mathfrak q=(\mathfrak u+\mathfrak u)+t$, this proves
$A\in\mathfrak p+\mathfrak q$ and completes the proof.
\end{proof}

\subsection{A dynamical criterion through adherence powers}
\label{subsec:dynamical-ultrafilter-criterion}

To prove Theorem~\ref{main3}, we will use the following result.

\begin{lem}[
{\cite[Corollary~6.21]{HS2012}}]
\label{lem:left-ideal-comparability}
Let $\mathfrak p,\mathfrak q\in\beta\N$ be distinct.  If
\[
 (\beta\N+\mathfrak p)\cap(\beta\N+\mathfrak q)\neq\varnothing,
\]
then
\[
 \mathfrak p\in\beta\N+\mathfrak q
 \quad\text{or}\quad
 \mathfrak q\in\beta\N+\mathfrak p.
\]
\end{lem}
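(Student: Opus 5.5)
The plan is to reduce the statement to a purely topological fact about countable subsets of $\beta\N$, and then to remove a principal summand by cancellation. Suppose $\mathfrak w=\mathfrak r+\mathfrak p=\mathfrak s+\mathfrak q$ for some $\mathfrak r,\mathfrak s\in\beta\N$.

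\textbf{Step 1: reduction to countable sets.} Since right translation $\mathfrak x\mapsto\mathfrak x+\mathfrak p$ is continuous, $\mathfrak r+\mathfrak p=\mathfrak r\text{-}\lim_m(m+\mathfrak p)$. Hence $\mathfrak w$ lies in the closure of the countable set $P=\{m+\mathfrak p:m\in\N\}$. Likewise $\mathfrak w$ lies in the closure of $Q=\{n+\mathfrak q:n\in\N\}$. Moreover $\overline P=\beta\N+\mathfrak p$ and $\overline Q=\beta\N+\mathfrak q$.

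\textbf{Step 2: the topological input.} I will invoke the standard fact about $\beta\N$ (\cite[Theorem~3.40]{HS2012}). It says that if $P,Q\subseteq\beta\N$ are countable and $\overline P\cap\overline Q\neq\varnothing$, then $P\cap\overline Q\neq\varnothing$ or $\overline P\cap Q\neq\varnothing$. This is the step carrying all the real content, and it is where I expect the main obstacle. If a self-contained argument were wanted, I would try to separate the countably many points of $P\setminus\overline Q$ and $Q\setminus\overline P$ by disjoint subsets of $\N$, constructed by a diagonal recursion. Here I would simply cite it. By symmetry it yields some $m\in\N$ and $\mathfrak x\in\beta\N$ with
\[
 m+\mathfrak p=\mathfrak x+\mathfrak q .
\]

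\textbf{Step 3: cancelling the principal summand $m$.} I split into two cases according to whether $\mathfrak x$ is principal.

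If $\mathfrak x$ is nonprincipal, then $m+\N\in\mathfrak x$. So $\mathfrak x=m+\mathfrak x'$, where $\mathfrak x'=\{C:m+C\in\mathfrak x\}$. This gives $m+\mathfrak p=m+(\mathfrak x'+\mathfrak q)$. Left translation by the principal ultrafilter $m$ is the continuous extension of the injective map $n\mapsto m+n$, and is therefore injective on $\beta\N$. Hence $\mathfrak p=\mathfrak x'+\mathfrak q\in\beta\N+\mathfrak q$.

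If $\mathfrak x=k$ is principal, the same injectivity and the centrality of principal ultrafilters \eqref{eq:principal-translation} apply. When $k>m$ they give $\mathfrak p=(k-m)+\mathfrak q\in\beta\N+\mathfrak q$. When $k<m$ they give $\mathfrak q=(m-k)+\mathfrak p\in\beta\N+\mathfrak p$. When $k=m$ they give $\mathfrak p=\mathfrak q$, which is excluded by hypothesis.

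The symmetric alternative from Step 2 is handled identically with the roles of $\mathfrak p$ and $\mathfrak q$ exchanged. In every case $\mathfrak p\in\beta\N+\mathfrak q$ or $\mathfrak q\in\beta\N+\mathfrak p$, which is the conclusion of the lemma.
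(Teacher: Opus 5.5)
Your argument is correct. The paper gives no proof of this lemma and only quotes \cite[Corollary~6.21]{HS2012}, so your proof fills in content the paper delegates to the reference. Your key input is the separation theorem for countable subsets of $\beta\N$ \cite[Theorem~3.40]{HS2012}.

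The steps all go through. The identity $\beta\N+\mathfrak p=\overline{\{m+\mathfrak p:m\in\N\}}$ holds, since $\mathfrak r+\mathfrak p=\mathfrak r\text{-}\lim_m(m+\mathfrak p)$ directly from the definition of addition. This reduces everything to an equation $m+\mathfrak p=\mathfrak x+\mathfrak q$ with $m$ principal. The cancellation is sound: $\mathfrak y\mapsto m+\mathfrak y$ is injective, and $\mathfrak x=m+\mathfrak x'$ whenever $\mathfrak x$ is nonprincipal. In the principal case you do not need centrality; associativity with $k=m+(k-m)$ suffices.

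Compared with the bare citation, your version isolates Theorem~3.40 as the only nontrivial ingredient. It also shows exactly where distinctness enters, namely the case $\mathfrak x=m$. That hypothesis is genuinely needed: for $\mathfrak p=\mathfrak q=1$ the conclusion fails, because $\{2,3,\ldots\}$ belongs to every element of $\beta\N+1$.

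Your sketched self-contained proof of Theorem~3.40 also works. Suppose $P=\{\mathfrak a_n\}$ and $Q=\{\mathfrak b_n\}$ satisfy $P\cap\overline Q=\varnothing=\overline P\cap Q$. Choose $C_n\in\mathfrak a_n$ belonging to no $\mathfrak b_i$, and $D_n\in\mathfrak b_n$ belonging to no $\mathfrak a_i$. Then $C=\bigcup_n\bigl(C_n\setminus\bigcup_{i\le n}D_i\bigr)$ and $D=\bigcup_n\bigl(D_n\setminus\bigcup_{i\le n}C_i\bigr)$ are disjoint. Moreover $C\in\mathfrak a_n$ and $D\in\mathfrak b_n$ for every $n$, which forces $\overline P\cap\overline Q=\varnothing$.
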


Let $A\subseteq\N$.  Consider the two-sided symbolic system
\[
 \Sigma=\{0,1\}^{\Z},\qquad (Tz)_j=z_{j+1},
\]
and let $a\in\Sigma$ be the indicator of $A$, where $a_j=0$ for
$j\leq0$.  Put
\[
 X_A=\overline{\{T^na:n\in\Z\}},
 \qquad
 E_A=\{z\in X_A:z_0=1\}.
\]
Then $E_A$ is clopen and $N(a,E_A)=A$.

The following proposition supplies both the dynamical formulation and the
proof of Theorem~\ref{main3}.
\begin{prop}
\label{prop:dynamical-commuting-pair}
For $A\subseteq\N$, the following statements are equivalent:
\begin{enumerate}
\item there exist an infinite set $B\subseteq\N$ and $t\in\Z_+$ such that
      \[
       B^{\oplus2}\subseteq A-t;
      \]
\item there exist $e\in\A(X_A,T)$ and $t\in\Z_+$ such that
      \[
       e^2a\in T^{-t}E_A;
      \]
\item there exist a nonprincipal ultrafilter
      $\mathfrak u\in\beta\N\setminus\N$ and $t\in\Z_+$ such that
      \[
       A-t\in\mathfrak u+\mathfrak u;
      \]
\item there exist $\mathfrak u\in\beta\N\setminus\N$ and $t\in\Z_+$ such
      that, on setting
      \[
       \mathfrak p=\mathfrak u,
       \qquad
       \mathfrak q=\mathfrak u+t,
      \]
      the ultrafilters $\mathfrak p$ and $\mathfrak q$ are nonprincipal
      and satisfy
      \[
       A\in \mathfrak p+\mathfrak q=\mathfrak q+\mathfrak p.
      \]
\item there exist nonprincipal ultrafilters
      $\mathfrak p,\mathfrak q\in\beta\N$ such that
      \[
       A\in \mathfrak p+\mathfrak q=\mathfrak q+\mathfrak p.
      \]
\end{enumerate}
\end{prop}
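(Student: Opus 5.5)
The plan is to prove the cycle $(1)\Rightarrow(2)\Rightarrow(3)\Rightarrow(4)\Rightarrow(5)\Rightarrow(3)\Rightarrow(1)$. Almost all of it uses the machinery already in place. The only new ingredient is the step $(5)\Rightarrow(3)$, which relies on Lemma~\ref{lem:left-ideal-comparability}.

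\emph{$(1)\Rightarrow(2)$.} Given $B$ and $t$, note that $B^{\oplus2}\subseteq A-t=N(a,T^{-t}E_A)$, and that $T^{-t}E_A$ is clopen in $X_A$. Pick any $e\in\A_B(X_A,T)$ and apply Lemma~\ref{lem:cluster-transfer} with $E_2=T^{-t}E_A$ and $E_1=X_A$. This gives $e^2a\in T^{-t}E_A$.

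\emph{$(2)\Rightarrow(3)$.} By Lemma~\ref{lem:beta-ellis}, choose $\mathfrak u\in\N^*$ with $\rho(\mathfrak u)=e$; then $\rho(\mathfrak u+\mathfrak u)=e^2$. Since $T^{-t}E_A$ is clopen, the ultrafilter-limit criterion used in the proof of Theorem~\ref{main2} gives $N(a,T^{-t}E_A)=A-t\in\mathfrak u+\mathfrak u$.

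\emph{$(3)\Rightarrow(4)$.} This is verbatim the last part of the proof of Theorem~\ref{main2}. The ultrafilter $\mathfrak q=\mathfrak u+t$ is nonprincipal. Centrality of principal ultrafilters gives $\mathfrak p+\mathfrak q=\mathfrak q+\mathfrak p=(\mathfrak u+\mathfrak u)+t$, and \eqref{eq:principal-translation} turns $A-t\in\mathfrak u+\mathfrak u$ into $A\in\mathfrak p+\mathfrak q$.

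\emph{$(4)\Rightarrow(5)$.} This is trivial.

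\emph{$(3)\Rightarrow(1)$.} I would route this through $(2)$. Set $e=\rho(\mathfrak u)\in\A(X_A,T)$. Then $A-t\in\mathfrak u+\mathfrak u$ means exactly $e^2a\in T^{-t}E_A$, again because the target is clopen. Theorem~\ref{thm:selection}, with $k=2$, $x_0=a$ and $V=T^{-t}E_A$, then yields an infinite $B$ with $B^{\oplus2}\subseteq N(a,T^{-t}E_A)=A-t$.

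\emph{$(5)\Rightarrow(3)$, the key step.} Let $\mathfrak p,\mathfrak q$ be nonprincipal with $A\in\mathfrak p+\mathfrak q=\mathfrak q+\mathfrak p$.

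If $\mathfrak p=\mathfrak q$, then $A\in\mathfrak p+\mathfrak p$, which is $(3)$ with $\mathfrak u=\mathfrak p$ and $t=0$.

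If $\mathfrak p\neq\mathfrak q$, the common value $\mathfrak p+\mathfrak q=\mathfrak q+\mathfrak p$ lies in $(\beta\N+\mathfrak q)\cap(\beta\N+\mathfrak p)$. Lemma~\ref{lem:left-ideal-comparability} then gives, say, $\mathfrak q=\mathfrak r+\mathfrak p$ for some $\mathfrak r\in\beta\N$; the other case is symmetric with the roles of $\mathfrak p$ and $\mathfrak q$ exchanged. Using associativity and commutativity,
\[
A\in\mathfrak q+\mathfrak p=\mathfrak r+(\mathfrak p+\mathfrak p).
\]
By the definition of addition, the set $\{n\in\N:A-n\in\mathfrak p+\mathfrak p\}$ belongs to $\mathfrak r$, so in particular it is nonempty. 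Any element $t$ of it gives $A-t\in\mathfrak p+\mathfrak p$ with $\mathfrak p$ nonprincipal, which is $(3)$.

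Commutativity is used precisely to guarantee that $A$ belongs to \emph{whichever} of the two products has the absorbed factor on the left. This is where I expect the main care to be needed: the direction convention for left ideals in Lemma~\ref{lem:left-ideal-comparability} must be checked against the paper's definition of addition. Even if the convention is reversed, the symmetric choice of product still lands on a set of the form $\mathfrak s+\mathfrak u+\mathfrak u$ with $\mathfrak u\in\{\mathfrak p,\mathfrak q\}$ nonprincipal.
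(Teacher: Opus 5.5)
Your proposal is correct and is essentially the paper's proof. You use the same ingredients: Lemma~\ref{lem:cluster-transfer} plus selection for $(1)\Leftrightarrow(2)$, the homomorphism $\rho$ with the clopen ultrafilter-limit criterion for $(2)\Leftrightarrow(3)$, centrality of principal ultrafilters for $(3)\Rightarrow(4)$, and Lemma~\ref{lem:left-ideal-comparability} applied to $\mathfrak q+\mathfrak p=\mathfrak r+(\mathfrak p+\mathfrak p)$ for $(5)\Rightarrow(3)$. The only differences are cosmetic: you close the loop via Theorem~\ref{thm:selection}, and you get $(4)\Rightarrow(3)$ through $(5)$.

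Your closing hedge is unnecessary and not accurate: with right ideals one would get $(\mathfrak u+\mathfrak u)+\mathfrak s$, which gives no shift. However, the lemma as stated is already in the left-ideal form matching the paper's addition, so your main argument stands.
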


\begin{proof}
Fix $t\in\Z_+$.  Since
\[
 N(a,T^{-t}E_A)=A-t,
\]
Theorem~\ref{main1}, applied with $k=2$, $E_1=X_A$ and
$E_2=T^{-t}E_A$, gives
\[
 \begin{split}
 &\text{there is an infinite }B\subseteq\N
   \text{ with }B^{\oplus2}\subseteq A-t\\
 &\hspace{35mm}\Longleftrightarrow
   \text{there is }e\in\A(X_A,T)
   \text{ with }e^2a\in T^{-t}E_A.
 \end{split}
\]
This proves the equivalence of (1) and (2).

For a clopen set $V\subseteq X_A$ and
$\mathfrak r\in\beta\N$, the definition of $\rho$ gives
\begin{equation}\label{eq:clopen-ultrafilter-criterion}
 \rho(\mathfrak r)a\in V
 \quad\Longleftrightarrow\quad
 N(a,V)\in\mathfrak r.
\end{equation}
By \eqref{eq:beta-to-ellis}, every $e\in\A(X_A,T)$ has the form
$e=\rho(\mathfrak u)$ for some nonprincipal
$\mathfrak u\in\beta\N\setminus\N$, and
\[
 \rho(\mathfrak u+\mathfrak u)=e^2.
\]
Taking $V=T^{-t}E_A$ in
\eqref{eq:clopen-ultrafilter-criterion}, we obtain
\[
 e^2a\in T^{-t}E_A
 \quad\Longleftrightarrow\quad
 A-t\in\mathfrak u+\mathfrak u.
\]
Hence (2) and (3) are equivalent.

Assume (3), and define
$\mathfrak p=\mathfrak u$ and
$\mathfrak q=\mathfrak u+t$.  Translation by a principal ultrafilter
preserves nonprincipality, so both $\mathfrak p$ and $\mathfrak q$ are
nonprincipal.  Associativity and the centrality of the principal
ultrafilter $t$ give
\[
 \mathfrak p+\mathfrak q
 =\mathfrak u+(\mathfrak u+t)
 =(\mathfrak u+\mathfrak u)+t
\]
and
\[
 \mathfrak q+\mathfrak p
 =(\mathfrak u+t)+\mathfrak u
 =\mathfrak u+(t+\mathfrak u)
 =\mathfrak u+(\mathfrak u+t)
 =(\mathfrak u+\mathfrak u)+t.
\]
Thus $\mathfrak p+\mathfrak q=\mathfrak q+\mathfrak p$.  Moreover,
\eqref{eq:principal-translation} yields
\[
 A-t\in\mathfrak u+\mathfrak u
 \quad\Longleftrightarrow\quad
 A\in(\mathfrak u+\mathfrak u)+t
 =\mathfrak p+\mathfrak q.
\]
This proves (4).  Conversely, if (4) holds for a pair of the displayed
special form, the same identities, read backwards, give
$A-t\in\mathfrak u+\mathfrak u$.  Therefore (3) and (4) are equivalent.
Clearly, (4) implies (5).

It remains to prove that (5) implies (3).  Let
\[
 \mathfrak r
 :=\mathfrak p+\mathfrak q
 =\mathfrak q+\mathfrak p
\]
and suppose that $A\in\mathfrak r$.  If
$\mathfrak p=\mathfrak q$, then (3) holds with
$\mathfrak u=\mathfrak p$ and $t=0$.

Assume now that $\mathfrak p\neq\mathfrak q$.  Since
\[
 \mathfrak r\in\beta\N+\mathfrak q
 \quad\text{and}\quad
 \mathfrak r\in\beta\N+\mathfrak p,
\]
Lemma~\ref{lem:left-ideal-comparability} gives either
\[
 \mathfrak q=\mathfrak s+\mathfrak p
 \quad\text{or}\quad
 \mathfrak p=\mathfrak s+\mathfrak q
\]
for some $\mathfrak s\in\beta\N$.  In the first case, associativity gives
\[
 \mathfrak r
 =\mathfrak q+\mathfrak p
 =\mathfrak s+(\mathfrak p+\mathfrak p).
\]
Consequently,
\[
 H:=\{t\in\N:A-t\in\mathfrak p+\mathfrak p\}
 \in\mathfrak s.
\]
Choose $t\in H$ and put $\mathfrak u=\mathfrak p$.  Then
$A-t\in\mathfrak u+\mathfrak u$, which is (3).  In the second case,
the same argument, using
\[
 \mathfrak r
 =\mathfrak p+\mathfrak q
 =\mathfrak s+(\mathfrak q+\mathfrak q),
\]
gives (3) with $\mathfrak u=\mathfrak q$.  This proves
${\rm (5)}\Rightarrow{\rm (3)}$ and completes the equivalence.

Conditions (1) and (5) are precisely the two conditions in
Theorem~\ref{main3}; hence the proposition also proves that theorem.
\end{proof}

\begin{proof}[A second proof of Theorem~\ref{main2}]
Apply Theorem~\ref{thm:density-fs} with $k=2$. There exist an infinite
$B\subseteq\N$ and $t\in\Z_+$ such that
$B^{\oplus2}\subseteq A-t$.  The implication
${\rm (1)}\Rightarrow{\rm (4)}$ in
Proposition~\ref{prop:dynamical-commuting-pair} produces nonprincipal
ultrafilters
\[
 \mathfrak p=\mathfrak u,
 \qquad
 \mathfrak q=\mathfrak u+t
\]
such that
\[
 \mathfrak p+\mathfrak q=\mathfrak q+\mathfrak p
 \quad\text{and}\quad
 A\in\mathfrak p+\mathfrak q.
\]
\end{proof}

\subsection{Power density in minimal systems}
Let $(X,T)$ be a topological dynamical system. For $x\in X$ and
$k\in\N$, denote
\[
 P_k(x):=\{p^kx:p\in\A(X,T)\}\ \text{ and }\ X_k(x)=\overline{\{T^{kn}x:n\in\N\}}.
\]

\begin{thm}
\label{thm:minimal-power-density}
Let $(X,T)$ be a minimal system, let $x\in X$, and let $k\in\N$. Then
\[
 \overline{P_k(x)}=X_k(x).
\]
In particular, the following are equivalent:
\begin{enumerate}
\item $(X,T^k)$ is minimal;
\item $\overline{P_k(x)}=X$ for every $x\in X$;
\item $\overline{P_k(x)}=X$ for some $x\in X$.
\end{enumerate}
Consequently, a minimal system $(X,T)$ is totally minimal if and only if
\[
 \overline{\{p^k x:p\in\A(X,T)\}}=X
\]
for every $k\in\N$ and every $x\in X$. Equivalently, it is enough that
this condition hold for every $k\in\N$ at one fixed point $x\in X$.
\end{thm}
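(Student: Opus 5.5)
The plan is to prove the two inclusions $\overline{P_k(x)}\subseteq X_k(x)$ and $X_k(x)\subseteq\overline{P_k(x)}$ separately, using the cyclic decomposition of a minimal system under $T^k$ and an idempotent of $\A(X,T)$ fixing $x$. The listed equivalences then follow quickly.

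\emph{Structure of $X_k(x)$.} Since $(X,T)$ is minimal, $x$ is uniformly recurrent for $T$, hence also for $T^k$. So $Y_0:=X_k(x)$ is a $T^k$-minimal set. The standard decomposition gives a divisor $m$ of $k$ and pairwise disjoint clopen sets $Y_i:=T^iY_0$, indexed by $i\in\Z/m\Z$, such that $X=Y_0\sqcup\cdots\sqcup Y_{m-1}$ and $T^nY_i=Y_{i+n}$. I would recall or sketch this: $X=\bigcup_{i<k}T^iY_0$ by minimality, distinct $T^k$-minimal sets are disjoint, and a finite closed partition consists of clopen sets.

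\emph{Inclusion $\subseteq$.} Let $p\in\A(X,T)$ and use Lemma~\ref{lem:escaping-net} to write $T^{n_\alpha}\to p$. Passing to a subnet, $n_\alpha\equiv r \pmod m$ for a fixed $r$. Each $Y_i$ is closed and $T^{n_\alpha}Y_i=Y_{i+r}$, so $pY_i\subseteq Y_{i+r}$. Iterating, $p^kY_0\subseteq Y_{kr}=Y_0$ because $m\mid k$. Hence $P_k(x)\subseteq Y_0=X_k(x)$, and $X_k(x)$ is closed.

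\emph{Inclusion $\supseteq$.} This is the step I expect to carry the real content: producing adherence elements whose $k$-th powers move $x$ exactly along $T^k$. The key claim is that $\A(X,T)$ is a two-sided ideal of $E(X,T)$. This holds since $E(X,T)=\A(X,T)\cup\{T^n:n\in\Z_+\}$, and $T^n$ commutes with everything and shifts tails, so $T^n\A\subseteq\A$ and $\A T^n\subseteq\A$. Consequently every minimal left ideal of $E(X,T)$ lies in $\A(X,T)$.

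Because $X$ is minimal, $x$ is a distal-type point for a minimal ideal: by the standard Ellis theory there is an idempotent $u$ in a minimal left ideal with $ux=x$. So $u\in\A(X,T)$. For $s\in\N$, put $p=T^su\in\A(X,T)$. Since $u$ commutes with $T$ and $u^k=u$,
\[
p^kx=T^{ks}u^kx=T^{ks}ux=T^{ks}x.
\]
Thus $\{T^{ks}x:s\in\N\}\subseteq P_k(x)$, and taking closures gives $X_k(x)\subseteq\overline{P_k(x)}$.

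\emph{Equivalences.} For (1)$\Rightarrow$(2): if $T^k$ is minimal, every $X_k(x)=X$. The implication (2)$\Rightarrow$(3) is trivial. For (3)$\Rightarrow$(1): $X_k(x)=X$ forces $Y_0=X$, so $X$ itself is $T^k$-minimal. Applying this for all $k$ yields the total-minimality characterization, including the version at one fixed point $x$.
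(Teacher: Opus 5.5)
Your proof is correct. For $\overline{P_k(x)}\subseteq X_k(x)$ it is essentially the paper's argument: the cyclic decomposition for $T^k$, plus a subnet of $(n_\alpha)$ with constant residue.

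For the reverse inclusion you take a genuinely different and more elementary route. The paper stays inside its finite-sums dictionary. For a relatively open $V$ with closure in $U$, the set $C=\{m\in\N:T^{km}x\in V\}$ is syndetic. The density finite sums theorem of \cite{KMRR2026} gives $D^{\oplus k}\subseteq C-s$. The rescaled set $B=kD+s$ then satisfies $B^{\oplus k}\subseteq N(x,V)$, and Lemma~\ref{mainlem-1} applied to any $p\in\A_B(X,T)$ yields $p^kx\in\overline V\subseteq U$.

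You instead use only Ellis theory. The decomposition $E(X,T)=\A(X,T)\cup\{T^n:n\in\Z_+\}$ makes $\A(X,T)$ a two-sided ideal. Hence it contains every minimal left ideal $L$: for $l\in L$ and $a\in\A(X,T)$ one has $al\in L\cap\A(X,T)$, so $L=E(X,T)\,al\subseteq\A(X,T)$. A minimal idempotent $u$ with $ux=x$ then gives $(T^su)^kx=T^{ks}x$.

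Your route bypasses the deep density theorem altogether and proves more. The forward $T^k$-orbit of $x$ lies in $P_k(x)$ itself, not only in its closure, and this half uses nothing beyond almost periodicity of $x$. The paper's route buys an illustration of the adherence-power/finite-sums correspondence, which is its theme, rather than economy.

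One wording fix: the property of $x$ you need is almost periodicity, not anything ``distal-type''. The justification runs as follows. A minimal left ideal $L=E(X,T)\,l$ (for any $l\in L$) is closed. So $Lx$ is a closed $T$-invariant set and equals $X$ by minimality. Hence $\{q\in L:qx=x\}$ is a nonempty closed subsemigroup, and the Ellis--Numakura lemma supplies $u$.
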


\begin{proof}
Choose a minimal subset $Y$ for the action of $T^k$. Since $T$ commutes
with $T^k$, each $T^jY$ is also $T^k$-minimal, and minimality of $T$
gives
\[
 X=\bigcup_{j=0}^{k-1}T^jY.
\]
Let $d$ be the least positive integer such that $T^dY=Y$. Then $d\mid k$,
and the sets $Y,TY,\ldots,T^{d-1}Y$ are pairwise disjoint clopen sets.
Indeed, two intersecting members of this family are equal because both are
$T^k$-minimal; minimality of $d$ then gives disjointness, and the finite
closed partition makes every member clopen.  They form the cyclic
decomposition associated with the power $T^k$. If $x\in T^jY$, then
\[
 X_k(x)=T^jY.
\]

Let $\pi:X\to\Z/d\Z$ be the factor map determined by this decomposition,
so that $\pi(Tx)=\pi(x)+1$. Take $p\in\A(X,T)$ and choose a net $(n_\alpha)$ with $n_{\alpha}\to\infty$ such that 
$T^{n_\alpha}\to p$. Passing to a subnet, we may suppose that
$n_\alpha\equiv r\pmod d$. Hence
\[
 \pi(px)=\pi(x)+r,
 \qquad
 \pi(p^kx)=\pi(x)+kr=\pi(x),
\]
because the first identity holds with $x$ replaced by any point of $X$ and
$d\mid k$. Therefore $p^kx\in X_k(x)$, and
\[
 \overline{P_k(x)}\subseteq X_k(x).
\]

For the reverse inclusion, let $U$ be a nonempty relatively open subset of
$X_k(x)$ and choose a nonempty relatively open set $V$ with
$\overline V^{\,X_k(x)}\subseteq U$. The system $(X_k(x),T^k)$ is
minimal.  Since $X_k(x)$ is closed in $X$, the relative closure of $V$
is also its closure in $X$.  Moreover,
\[
 C=N_{T^k}(x,V)=\{m\in\N:T^{km}x\in V\}
\]
is syndetic and hence has positive upper Banach density. Apply
Theorem~\ref{thm:density-fs} to $C$ and retain its exact $k$-fold
conclusion. Thus there exist an infinite $D\subseteq\N$ and $s\in\Z_+$
such that $D^{\oplus k}\subseteq C-s$. Put
\[
 B=kD+s:=\{kd+s:d\in D\}.
\]
For distinct $d_1,\ldots,d_k\in D$,
\[
 (kd_1+s)+\cdots+(kd_k+s)
 =k(d_1+\cdots+d_k+s),
\]
and $d_1+\cdots+d_k+s\in C$. Hence
$B^{\oplus k}\subseteq N(x,V)$. Choose $p\in\A_B(X,T)$ and apply
Lemma~\ref{mainlem-1}, taking $E_k=V$ and $E_j=X$ for $j<k$. We obtain
$p^kx\in\overline V^{\,X_k(x)}\subseteq U$. Since $U$ was arbitrary,
$X_k(x)\subseteq\overline{P_k(x)}$.

If $(X,T^k)$ is minimal, then $X_k(x)=X$ for every $x$. Conversely, if
$X_k(x)=X$ for even one $x$, the cyclic component containing $x$ is all
of $X$, so $Y=X$ and $(X,T^k)$ is minimal.  Together with
$\overline{P_k(x)}=X_k(x)$, this proves the three equivalences. Applying
them for every $k$ gives the characterization of total minimality.
\end{proof}

\subsection{One adherence element for all orders}

Hern\'andez, Kousek and Radi\'c proved that the infinite set generating the
exact $k$-fold sumsets can be chosen independently of $k$, provided the shift
is allowed to depend on $k$ \cite[Theorem A, with $\ell=0$]{HKR2025}.
The cluster lemma gives the following dynamical formulation.

\begin{thm}\label{thm:one-p-all-orders}
Let $(X,\mu,T)$ be a system, let
$a\in\operatorname{gen}(\mu,\Phi)$, and let
$E\subseteq X$ be open with $\mu(E)>0$. There exist
$p\in\A(X,T)$ and a sequence $(t_k)_{k\geq1}$ in $\Z_+$ such that
\[
 p^ka\in T^{-t_k}E\qquad\text{for every }k\geq1.
\]
\end{thm}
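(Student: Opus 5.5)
The plan mirrors the proof of Corollary~\ref{cor:finite-powers}, with Theorem~\ref{thm:density-fs} replaced by the result of Hern\'andez, Kousek and Radi\'c \cite[Theorem~A, $\ell=0$]{HKR2025}.

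First we shrink the target. As in the proof of Proposition~\ref{thm:equivalence}, choose $y\in E\cap\operatorname{supp}(\mu)$ and an open neighborhood $U$ of $y$ with $\overline U\subseteq E$. Then $\mu(U)>0$, and Lemma~\ref{lem:generic-return-density} shows that $A:=N(a,U)$ satisfies $d^*(A)>0$.

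Next we apply \cite[Theorem~A]{HKR2025} to $A$. It gives a single infinite set $B\subseteq\N$ and shifts $t_k\in\Z_+$ such that
\[
 B^{\oplus k}\subseteq A-t_k=N(a,T^{-t_k}U)\qquad\text{for every }k\geq1.
\]

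Now fix one $p\in\A_B(X,T)$; this set is nonempty by the discussion around \eqref{eq:AB}. For each $k$ separately, apply Lemma~\ref{lem:cluster-transfer} with $E_k=T^{-t_k}U$ and $E_j=X$ for $j<k$. This yields
\[
 p^ka\in\overline{T^{-t_k}U}=T^{-t_k}\overline U\subseteq T^{-t_k}E,
\]
where the equality holds because $T$ is a homeomorphism.

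The point to stress is that $p$ depends only on $B$, and not on $k$. Lemma~\ref{lem:cluster-transfer} holds for every $p\in\A_B(X,T)$, so the same $p$ serves all orders at once. Its proof for level $k$ uses only the hypothesis at level $k$, which is why the levels can be treated independently.

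The main obstacle is entirely the combinatorial input: a single $B$ for all $k$, with $k$-dependent shifts. This is exactly \cite[Theorem~A]{HKR2025}, which we take as given. The remaining subtlety is passing from the open target $E$ to the closed target $\overline U$ so that the closure appearing in Lemma~\ref{lem:cluster-transfer} stays inside $E$. The choice of $U$ with $\overline U\subseteq E$ handles this.
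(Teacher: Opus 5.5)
Your proposal is correct and follows essentially the same route as the paper: shrink $E$ to an open $U$ with $\mu(U)>0$ and $\overline U\subseteq E$, apply \cite[Theorem~A]{HKR2025} to $N(a,U)$ to get one infinite $B$ with $k$-dependent shifts, fix a single $p\in\A_B(X,T)$, and apply Lemma~\ref{lem:cluster-transfer} separately at each level $k$ with $E_k=T^{-t_k}U$ and $E_j=X$ for $j<k$. Your explicit justification of $U$ via $\operatorname{supp}(\mu)$ and of $\overline{T^{-t_k}U}=T^{-t_k}\overline U$ simply fills in details the paper leaves implicit.
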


\begin{proof}
Choose an open $U$ with $\mu(U)>0$ and $\overline U\subseteq E$.  The set
$A=N(a,U)$ has positive upper Banach density by
Lemma~\ref{lem:generic-return-density}.  By
\cite[Theorem~A, with $\ell=0$]{HKR2025}, there exist an infinite
$B\subseteq\N$ and shifts
$t_k\in\Z_+$ such that
\[
 B^{\oplus k}\subseteq A-t_k=N(a,T^{-t_k}U)
 \qquad(k\geq1).
\]
Choose $p\in\A_B(X,T)$.  For each fixed $k$, apply
Lemma~\ref{lem:cluster-transfer} with
$E_k=T^{-t_k}U$ and $E_j=X$ for $1\leq j<k$.  This yields
$p^ka\in T^{-t_k}\overline U\subseteq T^{-t_k}E$.
\end{proof}

\begin{rem}
The full form of \cite[Theorem~A]{HKR2025} implies that for every fixed
$\ell\geq0$ there exist $p_\ell\in\A(X,T)$ and shifts
$t_{\ell,k}\in\Z_+$ such
that
\[
 p_\ell^ia\in T^{-t_{\ell,k}}E
 \qquad(k\leq i\leq k+\ell)
\]
for every $k\geq1$.
\end{rem}

The same paper proves the existence of infinite sets $B_1,B_2,\ldots$ such
that $B_1+\cdots+B_m$ is contained in a given positive-density set for every
$m$ \cite[Corollary~C]{HKR2025}. This yields a phenomenon that records the
possible noncommutativity of the adherence semigroup.

\begin{cor}\label{cor:product-tower}
Under the hypotheses of Theorem~\ref{thm:one-p-all-orders}, there exists a
sequence $(p_i)_{i\geq1}$ in $\A(X,T)$ such that, for every $m\geq1$ and
every permutation $\sigma$ of $\{1,\ldots,m\}$,
\[
 p_{\sigma(1)}p_{\sigma(2)}\cdots p_{\sigma(m)}a\in E.
\]
\end{cor}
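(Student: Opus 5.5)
We follow the pattern of the proof of Theorem~\ref{thm:one-p-all-orders}. First choose an open set $U$ with $\mu(U)>0$ and $\overline U\subseteq E$, as in the proof of Proposition~\ref{thm:equivalence}. By Lemma~\ref{lem:generic-return-density}, the set $A=N(a,U)$ has positive upper Banach density. Apply \cite[Corollary~C]{HKR2025} to $A$. This gives infinite sets $B_1,B_2,\ldots\subseteq\N$ such that
\[
 B_1+\cdots+B_m\subseteq A=N(a,U)\qquad(m\geq1).
\]
We then set $p_i\in\A_{B_i}(X,T)$ for each $i$; these sets are nonempty since each $B_i$ is infinite. Note that no shift appears, so the target is $\overline U\subseteq E$ itself. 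If the cited corollary only provides the containment after a shift $t$, one replaces $U$ by $T^{-t}U$ and proceeds in the same way.

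The core step is a multi-set analogue of Lemma~\ref{lem:cluster-transfer}. Fix $m$ and a permutation $\sigma$. We prove by induction on $r=0,1,\ldots,m$ that
\[
 T^{b_{\sigma(1)}+\cdots+b_{\sigma(m-r)}}\,p_{\sigma(m-r+1)}\cdots p_{\sigma(m)}a\in\overline U
 \qquad\text{for all } b_{\sigma(i)}\in B_{\sigma(i)},\ i\leq m-r.
\]
The case $r=0$ holds because the sumset $B_1+\cdots+B_m$ is invariant under reordering the summands, so every such sum lies in $A=N(a,U)$.

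For the induction step, write $q=p_{\sigma(m-r+1)}\cdots p_{\sigma(m)}$ and $s=b_{\sigma(1)}+\cdots+b_{\sigma(m-r-1)}$. By \eqref{eq:AB-net}, take a net $c_\alpha\in B_{\sigma(m-r)}$ with $T^{c_\alpha}\to p_{\sigma(m-r)}$ pointwise. Using that $p_{\sigma(m-r)}$ commutes with $T^s$, we get
\[
 T^{s+c_\alpha}qa=T^{c_\alpha}(T^sqa)\longrightarrow p_{\sigma(m-r)}T^sqa=T^s\,p_{\sigma(m-r)}q\,a,
\]
and closedness of $\overline U$ carries the induction forward. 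Unlike in Lemma~\ref{lem:cluster-transfer}, no distinctness condition is needed, since the summands come from different sets. At $r=m$ we obtain $p_{\sigma(1)}\cdots p_{\sigma(m)}a\in\overline U\subseteq E$.

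The main point to watch is the order of operations. The net limit is applied to the point $T^sqa$, so the newly introduced element multiplies on the left of the existing product $q$. Because only evaluation at a fixed point and commutation with powers of $T$ are used, no continuity of left multiplication in $E(X,T)$ is needed. The only external input is the precise form of \cite[Corollary~C]{HKR2025}; the argument needs it to hold simultaneously for all $m$, possibly with a common shift absorbed into $U$.
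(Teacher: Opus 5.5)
Your proposal is correct and is essentially the paper's proof. It uses the same $U$, the same appeal to \cite[Corollary~C]{HKR2025} with $p_i\in\A_{B_i}(X,T)$, and the same successive limits in $b_{\sigma(m)},\ldots,b_{\sigma(1)}$, which you write out as an explicit induction. (In your contingency remark, a common shift $t$ should be absorbed by replacing $B_1$ with $B_1+t$, since replacing $U$ by $T^{-t}U$ only puts the products in $T^{-t}E$; the paper uses Corollary~C with no shift, so this does not arise.)
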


\begin{proof}
Choose an open set $U$ with $\mu(U)>0$ and
$\overline U\subseteq E$.  By Lemma~\ref{lem:generic-return-density},
$A=N(a,U)$ has positive upper Banach density.  Choose infinite sets
$B_1,B_2,\ldots$ such that $B_1+\cdots+B_m\subseteq A$ for every
$m$, as provided by \cite[Corollary~C]{HKR2025}. Choose
$p_i\in\A_{B_i}(X,T)$.

Fix $m$ and a permutation $\sigma$ of $\{1,\ldots,m\}$. For every
choice $b_i\in B_i$ we have
$T^{b_1+\cdots+b_m}a\in U$. By \eqref{eq:AB-net}, choose for every $i$
a net in $B_i$ tending to infinity whose powers converge to $p_i$.  First take the
limit in the variable $b_{\sigma(m)}$, keeping all other variables fixed.
Since $\overline U$ is closed, this gives
\[
 T^{\sum_{i\neq\sigma(m)}b_i}p_{\sigma(m)}a\in\overline U.
\]
Next take the cluster limits successively in the variables
\[
 b_{\sigma(m-1)},\ldots,b_{\sigma(1)}.
\]
Powers of $T$ commute with each adherence element, so the final limit is
\[
 p_{\sigma(1)}p_{\sigma(2)}\cdots p_{\sigma(m)}a
 \in\overline U\subseteq E.
\]
The reverse order of the limits is what produces the stated order of the
possibly noncommuting product.
\end{proof}


\end{document}